\theoremstyle{plain}
\newtheorem{theorem}{Theorem}
\newtheorem{lemma}[theorem]{Lemma}
\newtheorem{proposition}[theorem]{Proposition}
\newtheorem*{lemma2.3}{Lemma 2.3 of \cite{hochman1}}
\theoremstyle{definition}
\newtheorem{example}{Example}
\newcommand{\comment}[1]{}
\newcommand{\rea}{\ensuremath{\mathbf{R}}\xspace}
\newcommand{\ints}{\ensuremath{\mathbf{Z}}\xspace}
\newcommand{\intd}[1]{\,\mathrm{d}#1 \,}
\newcommand{\floor}[1]{\lfloor #1 \rfloor}
\newcommand{\closure}[1]{\ensuremath{\overline{#1}}\xspace}
\newcommand{\udim}[2]{\overline{D}_{#1}(#2)}
\newcommand{\diffk}[1]{\ensuremath{\xdiff_+^{#1}(\rea)}\xspace}
\newcommand{\scdi}[3]{\ensuremath{\langle #1 \rangle_{#2, #3}^{\square}}\xspace}
\newcommand{\scfl}[3]{\ensuremath{{#1}_{#2, #3}^{\square}}\xspace}
\newcommand\restr[2]{{                                  % we make the whole thing an ordinary symbol
  \left.\kern-\nulldelimiterspace               	% automatically resize the bar with \right
  #1                                                    % the function
  \vphantom{\big|}                                      % pretend it's a little taller at normal size
  \right|_{#2}                                          % this is the delimiter
  }}
\DeclareMathOperator{\supp}{supp}
\DeclareMathOperator{\xdiff}{diff}
\DeclareMathOperator{\sign}{sign}
\begin{document}

\title{On the asymptotics of the scenery flow}

\author{Magnus Aspenberg}
\address{Centre for Mathematical Sciences\\ Lund University\\ Box 118\\ 22 100 Lund\\ SWEDEN}
\email{magnusa@maths.lth.se}
\author{Fredrik Ekstr\"om}
\email{fredrike@maths.lth.se}
\author{Tomas Persson}
\email{tomasp@maths.lth.se}
\author{J\"org Schmeling}
\email{joerg@maths.lth.se}

\maketitle

\section{Introduction}
Various notions of ``zooming in'' on measures exist in the literature and the scenery flow
is one of them. A variant similar to the scenery flow was introduced by Bandt in \cite{bandt1} and studied
by Graf in \cite{graf1}. More recently, the scenery flow was defined by Gavish in \cite{gavish1}
and studied by Hochman in \cite{hochman1, hochman2}.
%\red{In the series of papers \cite{???}, Hochman and Shmerkin introduced and
%studied the scenery flow.}
One crucial step is to describe the joint
asymptotics of the scenery flows generated by a measure and the
measure transported by a local diffeomorphism, see Lemma 2.3 from \cite{hochman1}
below. Unfortunately the lemma was stated without proof, and the
assumptions are not sufficient to guarantee the conclusion. In this
paper we give sufficient conditions for similar conclusions,
and provide complete proofs. These statements show that the main
results of \cite{hochman1} are not affected. We think however it is important on
its own to have some more detailed insight to this crucial point in the properties of the
scenery flow.

These results arose from discussions at the Lund dynamics seminar,
and email correspondence with Michael Hochman. Hochman pointed out
that he possesses a corrected statement similar to Proposition
\ref{fintedimproposition} (see Proposition 1.9 in \cite{hochman2}).

Let $I = [-1, \, 1]$ and for any subset $A$ of $I$ and any $t \geq 0$
let
	$$
	A_t = e^{-t}A.
	$$
Let $\mathcal{M}(\rea)$ be the set of Radon measures on \rea and let $\mathcal{M}^{\square}$
be the topological space of Borel probability measures on $I$, considered as a subspace of
the dual of $C(I)$ with the weak-$*$ topology. (With this topology $\mathcal{M}^{\square}$ is compact.)
For any $\mu \in \mathcal{M}(\rea)$, any $x \in \supp \mu$ and any $t \geq 0$,
let $\scfl{\mu}{x}{t}$ be the measure in $\mathcal{M}^{\square}$ defined by
	$$
	\scfl{\mu}{x}{t}(A) =
	\frac{\mu(x + A_t)}{\mu(x + I_t)}, \qquad A \in \mathcal{B}(I).
	$$
Thus $\scfl{\mu}{x}{t}$ tells what $\mu$ looks like in a window of radius $e^{-t}$ around $x$.
The one-parameter family $\{ \scfl{\mu}{x}{t} \}_{t \geq 0}$ is called the \emph{scenery} of $\mu$
at $x$.

A Borel probability measure on $\mathcal{M}^{\square}$ is called a \emph{distribution}.
The set of distributions is given the weak-$*$ topology that comes from considering it
as a subspace of the dual of $C(\mathcal{M}^{\square})$. For any measure $\mu \in \mathcal{M}(\rea)$,
any $x \in \supp \mu$ and any $T \geq 0$, let $\scdi{\mu}{x}{T}$ be the distribution defined
by
	$$
	\scdi{\mu}{x}{T} (E) = \frac{1}{T} \lambda (\{ t \leq T; \, \scfl{\mu}{x}{t} \in E \}), \qquad
	E \in \mathcal{B}(\mathcal{M}^{\square}),
	$$
where $\lambda$ is Lebesgue measure (one can show that $t \mapsto \scfl{\mu}{x}{t}$ is left continuous,
hence measurable, so that $\scdi{\mu}{x}{T}$ is well defined. See for example Lemma \ref{contlemma} below).
If $P$ is a distribution and
	$$
	\lim_{T \to \infty} \scdi{\mu}{x}{T} = P
	$$
then $\mu$ is said to \emph{generate} $P$ at $x$. Thus $\mu$ generates $P$ at $x$ if and
only if
	$$
	\lim_{T \to \infty} \frac{1}{T} \int_{0}^{T} g(\scfl{\mu}{x}{t}) \intd{t} =
	\int g \intd{P}
	$$
for every $g \in C(\mathcal{M}^{\square})$.

If $\mu$ is a measure on $X$ and $f: X \to Y$ is a measurable function, then $f\mu$ denotes the
measure on $Y$ given by
	$$
	(f \mu)(A) = \mu(f^{-1}(A))
	$$
for measurable subsets $A$ of $Y$.
The set of orientation preserving $C^k$-diffeomorphisms $\rea \to \rea$ will be denoted by
$\diffk{k}${}.

The following appears as Lemma 2.3 in \cite{hochman1}.
\begin{lemma2.3}
Let $\mu \in \mathcal{M}(\rea)$, $x \in \supp \mu$ and $f \in \diffk{1}$. Then after
a time-shift of $s = \log f'(x)$ the sceneries $\scfl{\mu}{x}{t}$ and $\scfl{(f\mu)}{f(x)}{t}$ are
asymptotic, i.e.
	$$
	\lim_{t \to \infty} (\scfl{\mu}{x}{t} - \scfl{(f\mu)}{f(x)}{t - s}) = 0 \quad (\text{weak-}*).
	$$
In particular $\mu$ generates $P$ at $x$ if and only if $f\mu$ generates $P$ at $f(x)$.
\end{lemma2.3}
The idea is that locally around $x$, the diffeomorphism $f$ acts as a translation that
takes $x$ to $f(x)$ composed with a scaling around $f(x)$ by the factor $f'(x) = e^s$.
However, no detailed proof is given in \cite{hochman1}, and in fact it is possible to
construct counterexamples to both of the statements in Lemma 2.3 (see Section \ref{examplessec}).
What can go wrong is that if $\mu$ has a lot of measure near the boundary
of $x + I_t$, then even the small distortion caused by $f$ can push a relatively
large amount of measure across the boundary, thus creating a big difference in
normalisation between the measures $\scfl{\mu}{x}{t}$ and $\scfl{(f\mu)}{f(x)}{t - s}$. Under
a certain (mild) condition, the set of $t \leq T$ for which this happens has small
relative density in $[0, \, T]$ if $T$ is large. Then the distributions $\scdi{\mu}{x}{T}$
and $\scdi{f\mu}{f(x)}{T}$ will be asymptotic, even if $\scfl{\mu}{x}{t}$ and
$\scfl{(f\mu)}{f(x)}{t - s}$ are not.

\section{An Auxiliary Result and its Consequences}
\begin{lemma} \label{TClemma}
Let $\mu \in \mathcal{M}(\rea)$ and $x \in \supp \mu$, and suppose that
        \begin{equation}        \label{TC}
        \lim_{K \to \infty} \limsup_{\gamma \to 1^+} \limsup_{T \to \infty}
        \frac{1}{T} \lambda \left(
        \left\{t \leq T; \, \frac{\mu(x + \gamma I_t)}{\mu(x + I_t)} \geq K \right\}
        \right) = 0.
        \end{equation}
Then $\scdi{\mu}{x}{T}$ and $\scdi{f\mu}{f(x)}{T}$ are asymptotic for any $f \in \diffk{1}$. 
\end{lemma}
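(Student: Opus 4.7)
The plan is to show, for every $g \in C(\mathcal{M}^{\square})$, that
\[
\int g \intd{\scdi{\mu}{x}{T}} - \int g \intd{\scdi{f\mu}{f(x)}{T}} \to 0 \qquad \text{as } T \to \infty.
\]
Setting $s = \log f'(x)$, a translation of the variable of integration in the time-average defining $\scdi{f\mu}{f(x)}{T}$ contributes only an $O(s/T)$ boundary error, so it suffices to bound
\[
\frac{1}{T} \int_0^T \big|g(\scfl{\mu}{x}{t}) - g(\scfl{f\mu}{f(x)}{t-s})\big| \intd{t}.
\]
I would partition $[0, T]$ into the set $B_T = B_T(K, \gamma)$ appearing in \eqref{TC} and its complement $G_T$. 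On $B_T$, the integrand is dominated by $2\|g\|_\infty$, and its contribution to the average is at most $2\|g\|_\infty \cdot |B_T|/T$, which by \eqref{TC} vanishes in the iterated limit $T \to \infty$, $\gamma \to 1^+$, $K \to \infty$.

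For $t \in G_T$, I would compare the two sceneries via the first-order Taylor expansion of $f$ at $x$. Since $f \in \diffk{1}$ and $f'(x) = e^s$, for every $\eta > 0$ there exists $t_0 = t_0(\eta, f)$ such that, for all $t \geq t_0$,
\[
x + (1-\eta) I_t \subseteq f^{-1}(f(x) + I_{t - s}) \subseteq x + (1+\eta) I_t,
\]
and $\sup_{y \in x + I_t} |e^{t-s}(f(y) - f(x)) - e^t(y - x)| \to 0$ as $t \to \infty$. Testing an arbitrary $\phi \in C(I)$ against both sceneries and expanding the difference, one obtains two kinds of error terms: an integrand error, controlled by the modulus of continuity of $\phi$ together with the Taylor estimate (small once $t$ is large), and a boundary/normalisation error, bounded by $\|\phi\|_\infty$ times the relative $\mu$-mass of the annulus $(x + (1+\eta)I_t) \setminus (x + (1-\eta)I_t)$. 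A change of variables identifies this annular ratio with $\mu(x + \gamma' I_u)/\mu(x + I_u) - 1$ at a shifted time $u = t - \log(1 - \eta)$ and with $\gamma' = (1+\eta)/(1-\eta)$, precisely the quantity constrained by \eqref{TC}.

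The main obstacle is passing from the coarse pointwise bound $\mu(x + \gamma I_t)/\mu(x + I_t) \leq K$ on $G_T$ to a genuinely small contribution to the time average, since the naive pointwise estimate only yields an $O(K)$-bound rather than a small one. I plan to handle this by a careful balancing of the iterated-limit structure of \eqref{TC}: given $\varepsilon > 0$, first fix $K = K(\varepsilon)$ and $\gamma = \gamma(\varepsilon, K)$ close to $1$ so that $\limsup_T |B_T|/T < \varepsilon/(1 + 2\|g\|_\infty)$, and then choose $\eta$ and $t_0$ accordingly so that the good-set error contributes less than $\varepsilon/2$. Summing and letting $T \to \infty$, then $\varepsilon \to 0$, yields the asymptotic equivalence. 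The delicate bookkeeping between $K$, $\gamma$, and $\eta$ in this iterated limit is exactly the step missing from the proof sketch of Lemma 2.3 in \cite{hochman1}; the content of \eqref{TC} is designed to make it rigorous.
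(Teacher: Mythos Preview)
Your overall framework is correct and agrees with the paper: reduce to showing
\[
\frac{1}{T}\int_0^T \bigl|g(\scfl{\mu}{x}{t}) - g(\scfl{f\mu}{f(x)}{t-s})\bigr|\intd{t} \to 0
\]
for each $g\in C(\mathcal{M}^\square)$, use uniform continuity of $g$ on the compact space $\mathcal{M}^\square$ to pass to finitely many test functions $\varphi_i\in C(I)$, and compare the two sceneries via the first-order expansion of $f$ at $x$. The bad-set contribution is handled exactly as you describe.

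The gap is in the good-set estimate, and you have in fact correctly located it yourself: the only pointwise information on $G_T$ is $\mu(x+\gamma I_t)/\mu(x+I_t)<K$, which bounds the annulus error by something of order~$K$, not something small. Your proposed resolution, ``careful balancing of the iterated-limit structure'', does not close this. Condition~\eqref{TC} says only that for each \emph{large} $K$ one can make the bad-set density small by taking $\gamma$ close to~$1$; it says nothing about small~$K$, and hence gives no mechanism for making the annulus ratio itself small on the complement. However you order the choices of $K,\gamma,\eta$, you are caught: taking $K$ small enlarges the bad set, while taking $K$ large leaves the good-set pointwise bound large. The step you describe as ``missing from the proof sketch of Lemma~2.3 in~\cite{hochman1}'' is still missing here.

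What the paper supplies is a \emph{time-averaging} argument in place of a pointwise one. For each interval $A\ni0$ and $\delta>0$ it sets
\[
h_\delta(t)=\frac{\mu(A_{t-\delta})}{\mu(I_{t+\delta})}-\frac{\mu(A_{t+\delta})}{\mu(I_{t-\delta})},
\]
which dominates the boundary/normalisation error once $t$ is large, and then bounds not $h_\delta$ pointwise on $G_T$ but rather $\int_{G_T\cap[0,T]}h_\delta\intd{\lambda}$. The numerator difference is the $\mu$-mass of two short intervals $J_c(t)$ near the endpoints of $A_t$; a Fubini estimate (Lemma~\ref{tubelemma}) gives
\[
\int_{kL}^{(k+1)L}\mu(J_c(t))\intd{t}\le 6\delta e^L\,\mu(\gamma I_{(k+1)L})
\]
on each window $[kL,(k+1)L]$. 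Dividing by $\mu(I_t)$ and invoking the good-set bound $\mu(\gamma I_t)/\mu(I_t)<K$ yields a contribution $O(K\delta e^L)$ per window, hence a time-average $O(K\delta e^L/L)\to0$ as $\delta\to0$ with $K,\gamma,L$ held fixed. This is Lemma~\ref{hdeltalemma}; Chebyshev then gives (Lemma~\ref{difflemma}) that the density of times with $|\scfl{\mu}{x}{t}(\varphi)-\scfl{f\mu}{f(x)}{t-s}(\varphi)|\ge\alpha$ tends to~$0$, after which the proof concludes exactly as in your last paragraph. In short: the annulus ratio need not be small pointwise, but its time integral is, and that integration step (Lemmas~\ref{tubelemma} and~\ref{hdeltalemma}) is the idea your proposal is missing.
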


Before we proceed to the proof of Lemma~\ref{TClemma}, we will state
some consequences. The proofs of Propositions \ref{fintedimproposition}, \ref{succeedproposition}
and \ref{failproposition} are given in Section \ref{applicationsection}.

The upper local dimension of a measure $\mu \in \mathcal{M}(\rea)$ at $x$ is
defined to be
	$$
	\udim{\mu}{x} = \limsup_{r \to 0^+} \frac{\log \mu([x - r, \, x + r])}{\log r}.
	$$
It can be shown that if a Borel set $E$ has positive $\mu$-measure and $\udim{\mu}{x} \geq c$
for all $x \in E$ then the packing dimension of $E$ is $\geq c$ (see for example Proposition 2.3
in \cite{falconer1}). This implies that $\udim{\mu}{x} \leq 1$ for $\mu$-a.e. $x$, so the
next proposition says in particular that $\mu$-a.e. $x$ is such that 
$\scdi{\mu}{x}{T}$ and $\scdi{f\mu}{f(x)}{T}$ are asymptotic for all $f \in \diffk{1}$.

\begin{proposition}	\label{fintedimproposition}
Let $\mu \in \mathcal{M}(\rea)$ and suppose that $\udim{\mu}{x} < \infty$.
Then $\scdi{x}{\mu}{T}$ and $\scdi{f\mu}{f(x)}{T}$ are asymptotic for all
$f \in \diffk{1}$.
\end{proposition}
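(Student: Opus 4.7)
The plan is simply to verify condition \eqref{TC} of Lemma~\ref{TClemma} under the hypothesis $\udim{\mu}{x} < \infty$; the proposition then follows immediately from that lemma. First I pick some $D$ with $\udim{\mu}{x} < D < \infty$. By definition of the upper local dimension, there is a threshold $t_0 \geq 0$ such that $\mu(x + I_t) \geq e^{-Dt}$ for all $t \geq t_0$. Writing $F(t) := -\log \mu(x + I_t)$, this becomes $F(t) \leq Dt$ for $t \geq t_0$; moreover $F$ is non-decreasing because $I_t$ shrinks as $t$ grows.

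The key observation is the scaling identity $\gamma I_t = I_{t - \log \gamma}$, which, with $s = \log \gamma > 0$, rewrites the ratio appearing in \eqref{TC} as
$$
\frac{\mu(x + \gamma I_t)}{\mu(x + I_t)} = e^{F(t) - F(t-s)}.
$$
Hence the set inside \eqref{TC} is exactly $\{t \leq T : F(t) - F(t-s) \geq \log K\}$, and the increments $F(t) - F(t-s)$ are non-negative. The next step is to average these increments via the telescoping substitution $u = t - s$, which gives
$$
\int_s^T \bigl(F(t) - F(t-s)\bigr) \intd{t} = \int_{T-s}^T F(t) \intd{t} - \int_0^s F(u) \intd{u}.
$$
For $T$ large enough that $T - s > t_0$, the first integral on the right is bounded by $\int_{T-s}^T Dt \intd{t} \leq DsT$, and the second is non-negative, so the whole left-hand side is at most $DsT$.

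Applying Markov's inequality to the non-negative function $F(t) - F(t-s)$ then yields
$$
\lambda\bigl(\{t \in [s,T] : F(t) - F(t-s) \geq \log K\}\bigr) \leq \frac{DsT}{\log K},
$$
while the contribution of $t \in [0, s]$ is at most $s$. Dividing by $T$ and letting $T \to \infty$ leaves the bound $D \log \gamma / \log K$, which tends to $0$ as $\gamma \to 1^+$ (and a fortiori after the outer limit $K \to \infty$). Condition \eqref{TC} therefore holds and Lemma~\ref{TClemma} delivers the proposition. The main obstacle, if one calls it that, is recognising that scaling the window by $\gamma$ corresponds to a time shift in $F$; once this is in place, the dimension bound $F(t) \leq Dt$ together with a single Markov estimate does everything.
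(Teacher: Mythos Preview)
Your proof is correct and follows essentially the same route as the paper: both verify condition \eqref{TC} by using the scaling identity $\gamma I_t = I_{t-\log\gamma}$, the monotonicity of $t \mapsto -\log\mu(x+I_t)$, and a Markov/Chebyshev bound on the integral of its increments over $[0,T]$. One small slip worth noting: the claim that $\int_0^s F(u)\intd{u} \geq 0$ need not hold for a general Radon measure, since $\mu(x+I_0)$ may exceed $1$; but this integral is a constant independent of $T$, so after dividing by $T$ and letting $T\to\infty$ it disappears and the conclusion is unaffected.
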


If $\scdi{\mu}{x}{T}$ has an $\omega$-limit point $P$ such that $0 \in \supp \nu$ for
all $\nu \in \supp P$ then the subset of $\mathcal{M}^\square$ where
	$$
	S_t^{\square}: \nu \mapsto \nu_{0, t}
	$$
is defined for all $t \geq 0$ has full $P$-measure, and
$(\mathcal{M}^{\square}, S^{\square}, P)$ is a measure preserving flow%
\footnote{%
Proof sketch: If $\scdi{\mu}{x}{T_k} \to P$, it can be shown that
$P(\{ \nu; \, \nu({c}) > 0 \}) = 0$ for any $c \in (0, \, 1) \setminus \{0\}$,
and thus for fixed $t > 0$ the map $S_t^{\square}$ is continuous $P$-a.e.
(since $S_t^{\square}$ is discontinuous at $\nu$ if and only if $\nu(\{\pm e^{-t}\}) > 0$).
Hence for any $g \in C(\mathcal{M}^{\square})$, the composition $g \circ S_t^{\square}$
is continuous $P$-a.e., so
	$$
	\lim_{T \to \infty} \int g \circ S_t^{\square} \intd{\scdi{\mu}{x}{T}}
	= \int g \circ S_t^{\square} \intd{P}
	$$
(see for example Theorem 3.1.5 in \cite{stroock1}). But this implies that
$S_t^{\square} \scdi{\mu}{x}{T_k} \to S_t^{\square} P$, and it follows that $P$ is
$S^{\square}$-invariant since
$S_t^{\square} \scdi{\mu}{x}{T}$ and $\scdi{\mu}{x}{T}$ are asymptotic.
}%
. A consequence of the next proposition is
that in this situation, $\scdi{\mu}{x}{T}$ and $\scdi{f\mu}{f(x)}{T}$ are asymptotic
for all $f \in \diffk{1}$.

\begin{proposition}	\label{succeedproposition}
Let $\mu \in \mathcal{M}(\rea)$ be a measure with $x \in \supp \mu$, and suppose
that $P(\{ (1 - \beta)\delta_{-1} + \beta \delta_1; \, 0 \leq \beta \leq 1 \}) = 0$
for every distribution $P$ in the $\omega$-limit set of $\scdi{\mu}{x}{T}$.
Then $\scdi{\mu}{x}{T}$ and $\scdi{f\mu}{f(x)}{T}$ are asymptotic for all $f \in \diffk{1}$.
\end{proposition}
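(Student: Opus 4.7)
The plan is to apply Lemma~\ref{TClemma} by verifying condition (\ref{TC}). The key algebraic observation is that $\gamma I_t = I_{t - \log \gamma}$, so a direct computation from the definition of $\scfl{\mu}{x}{\cdot}$ gives
$$
\scfl{\mu}{x}{t - \log \gamma}(\gamma^{-1} I) = \frac{\mu(x + I_t)}{\mu(x + \gamma I_t)}.
$$
Hence the event $\mu(x + \gamma I_t) / \mu(x + I_t) \geq K$ is equivalent to $\scfl{\mu}{x}{t - \log \gamma} \in B_{K, \gamma}$, where
$$
B_{K, \gamma} := \bigl\{ \nu \in \mathcal{M}^{\square} ; \; \nu(I \setminus \gamma^{-1} I) \geq 1 - 1/K \bigr\}.
$$
Since $I \setminus \gamma^{-1} I$ is closed, $\nu \mapsto \nu(I \setminus \gamma^{-1} I)$ is upper semicontinuous, so $B_{K, \gamma}$ is a closed subset of $\mathcal{M}^{\square}$. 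A change of variable $s = t - \log \gamma$ shows that the Lebesgue average in (\ref{TC}) equals $\scdi{\mu}{x}{T}(B_{K, \gamma})$ up to an error of order $\log \gamma / T$ coming from the endpoints, and this vanishes as $T \to \infty$.

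I would then argue by contradiction. Suppose (\ref{TC}) fails; then there exist $\varepsilon > 0$ and sequences $K_n \uparrow \infty$, $\gamma_n \downarrow 1^+$, and $T_n \to \infty$ with $\scdi{\mu}{x}{T_n}(B_{K_n, \gamma_n}) \geq \varepsilon$. By compactness of the space of distributions on $\mathcal{M}^{\square}$, one may pass to a subsequence so that $\scdi{\mu}{x}{T_n} \to P$ for some $P$ in the $\omega$-limit set of $\scdi{\mu}{x}{T}$. The family satisfies the monotonicity $B_{K', \gamma'} \subseteq B_{K, \gamma}$ whenever $K' \geq K$ and $\gamma' \leq \gamma$, so for every fixed $m$ and all $n \geq m$, $B_{K_n, \gamma_n} \subseteq B_{K_m, \gamma_m}$. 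Since $B_{K_m, \gamma_m}$ is closed, the Portmanteau theorem yields
$$
P(B_{K_m, \gamma_m}) \geq \limsup_{n \to \infty} \scdi{\mu}{x}{T_n}(B_{K_m, \gamma_m}) \geq \varepsilon \quad \text{for every } m.
$$

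Letting $m \to \infty$, the sets $B_{K_m, \gamma_m}$ decrease, and any $\nu$ in their intersection must satisfy $\nu(I \setminus \gamma_m^{-1} I) \geq 1 - 1/K_m$ for every $m$. Since $I \setminus \gamma_m^{-1} I \downarrow \{-1, 1\}$ and $1 - 1/K_m \to 1$, such a $\nu$ has $\nu(\{-1, 1\}) = 1$, i.e.\ is of the form $(1 - \beta) \delta_{-1} + \beta \delta_1$ for some $\beta \in [0, 1]$. Continuity of $P$ from above together with the hypothesis then forces $P(B_{K_m, \gamma_m}) \downarrow 0$, contradicting the lower bound $\varepsilon$. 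Therefore (\ref{TC}) holds, and Lemma~\ref{TClemma} delivers the conclusion.

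The main technical point is this contradiction-and-compactness step: it converts the hypothesis, which controls the $P$-measure of the two-point measures for each individual $P$ in the $\omega$-limit set, into the uniform-in-$T$ statement required by (\ref{TC}). It relies only on $B_{K, \gamma}$ being closed (so that Portmanteau applies) and on the monotonicity of the family $B_{K, \gamma}$ in both parameters (so that a single $P$ can be used against every $m$ simultaneously).
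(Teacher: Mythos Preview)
Your strategy coincides with the paper's: translate the ratio condition in \eqref{TC} into membership of $\scfl{\mu}{x}{\cdot}$ in a family of sets of measures concentrated near $\{\pm 1\}$, and then use compactness of the space of distributions together with the hypothesis on the $\omega$-limit set. The paper argues directly via open covers of the compact $\omega$-limit set $\Omega$ and the quantity $\varepsilon_\gamma^K=\sup_{P\in\Omega}P(C_\gamma^K)$; your contradiction-plus-Portmanteau argument is an equivalent repackaging.

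There is one slip that matters. The set $I\setminus\gamma^{-1}I=[-1,-\gamma^{-1})\cup(\gamma^{-1},1]$ is \emph{open} in $I$, not closed (it is the complement of the closed interval $\gamma^{-1}I$). Consequently $\nu\mapsto\nu(I\setminus\gamma^{-1}I)$ is lower semicontinuous, and $B_{K,\gamma}=\{\nu:\nu(I\setminus\gamma^{-1}I)\ge 1-1/K\}$ is in general \emph{not} closed: for instance $\delta_{\gamma^{-1}+1/n}\in B_{K,\gamma}$ for all large $n$ but the limit $\delta_{\gamma^{-1}}$ is not. This breaks the Portmanteau step as written.

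The fix is immediate and is exactly what the paper does: work instead with
$$
C_\gamma^K=\bigl\{\nu\in\mathcal{M}^{\square};\ \nu\bigl((-\gamma^{-1},\gamma^{-1})\bigr)\le 1/K\bigr\},
$$
which is closed because the interval is open (Lemma~\ref{opensetlemma}). Since $\nu((-\gamma^{-1},\gamma^{-1}))\le\nu(\gamma^{-1}I)$, your event $\scfl{\mu}{x}{t-\log\gamma}(\gamma^{-1}I)\le 1/K$ still implies membership in $C_\gamma^K$, so the lower bound $\scdi{\mu}{x}{T_n}(C_{\gamma_n}^{K_n})\ge\varepsilon$ persists. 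Monotonicity in $(K,\gamma)$ and the identification $\bigcap_m C_{\gamma_m}^{K_m}=\{(1-\beta)\delta_{-1}+\beta\delta_1:0\le\beta\le 1\}$ hold just as before, and your contradiction argument then goes through unchanged.
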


The following proposition is almost the converse of Proposition \ref{succeedproposition}.

\begin{proposition}	\label{failproposition}
Let $\mu \in \mathcal{M}(\rea)$ and $x \in \supp \mu$, and suppose that $\scdi{\mu}{x}{T}$
has an $\omega$-limit point $P$ such that
	$$
	P(\{ (1 - \beta)\delta_{-1} + \beta \delta_{1}; \, 0 < \beta < 1 \}) > 0.
	$$
Then there exists a diffeomorphism $f \in \diffk{1}$ such that $\scdi{\mu}{x}{T}$
and $\scdi{f\mu}{f(x)}{T}$ are not asymptotic.
\end{proposition}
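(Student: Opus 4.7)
Since $P(B) > 0$ where $B = \{(1-\beta)\delta_{-1}+\beta\delta_1 : 0<\beta<1\}$, by inner regularity I first pick $0 < \beta_0 < 1/2$ such that the compact set $K = \{(1-\beta)\delta_{-1}+\beta\delta_1 : \beta \in [\beta_0, 1-\beta_0]\}$ satisfies $P(K) = p > 0$. Let $T_k \to \infty$ be a subsequence along which $\scdi{\mu}{x}{T_k} \to P$. The plan is to construct $f \in \diffk{1}$ and a continuous functional $G \in C(\mathcal{M}^{\square})$ for which the difference $\int G \, d\scdi{\mu}{x}{T_k} - \int G \, d\scdi{f\mu}{f(x)}{T_k}$ stays bounded away from zero, which shows that $\scdi{\mu}{x}{T}$ and $\scdi{f\mu}{f(x)}{T}$ are not asymptotic. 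By Portmanteau, along $T_k$ any fixed neighbourhood $U$ of $K$ captures a density $\ge p$ of the times $t \leq T_k$ with $\scfl{\mu}{x}{t} \in U$, and weak-$*$ closeness to $K$ forces the corresponding measures $\mu$ to have mass concentrated near the window endpoints $x \pm e^{-t}$ with a definite right/left split $\beta \in [\beta_0, 1-\beta_0]$.

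I would take $f$ with $f(x) = x$, $f'(x) = 1$ (so $s = \log f'(x) = 0$), equal to the identity on $(-\infty, x]$, and $f(y) = y + H(y-x)$ on $[x, \infty)$ for a $C^1$ strictly increasing $H : [0, \infty) \to [0, \infty)$ with $H(0) = H'(0) = 0$, adapted to $\mu$. As test functional take $G(\nu) = \int_I y \, d\nu(y)$, which is continuous on $\mathcal{M}^{\square}$; for $\nu = (1-\beta)\delta_{-1}+\beta\delta_1 \in K$ one has $G(\nu) = 2\beta - 1$. The geometric picture is that $f$ shifts the right-side mass of $\mu$ rightward while fixing the left-side mass, so when $\scfl{\mu}{x}{t}$ is close to $K$ a portion of the right-boundary mass is expelled from the window $x + I_t$ around $f(x) = x$ (the scenery $\scfl{f\mu}{x}{t}$ is in fact approximately the scenery of $\mu$ about the slightly shifted point determined by $H$). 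A direct calculation with the normalisation then gives $G(\scfl{\mu}{x}{t}) - G(\scfl{f\mu}{x}{t}) \geq c_0 > 0$ for a constant $c_0 = c_0(\beta_0)$ at the relevant times, and averaging over $[0, T_k]$ yields $\liminf_k \bigl| \int G \, d\scdi{\mu}{x}{T_k} - \int G \, d\scdi{f\mu}{f(x)}{T_k} \bigr| \geq c_0 p/2$, forcing non-asymptoticity.

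The principal obstacle — and the step that must be done with care — is the quantitative matching between $f$'s distortion and $\mu$'s concentration rate. Prokhorov closeness of $\scfl{\mu}{x}{t}$ to $K$ at level $\varepsilon$ localises $\mu$'s right-boundary mass only to the strip $[x + e^{-t}(1-\varepsilon), x + e^{-t}]$ of absolute width $\varepsilon e^{-t}$, while $f$ displaces the right boundary by $H(e^{-t})$, i.e.\ by $H(e^{-t})/e^{-t}$ in relative terms; for a definite fraction of the right-boundary mass to cross the boundary one needs $H(e^{-t})/e^{-t} \gtrsim \varepsilon_t$, where $\varepsilon_t$ is the actual Prokhorov distance of $\scfl{\mu}{x}{t}$ to $K$, and this rate may decay arbitrarily slowly as $t \to \infty$. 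A naive choice like $H(u) = cu^2$ gives only $ce^{-t} \to 0$ in relative terms and is insufficient. The remedy is to build $H$ from $\mu$: via a diagonal extraction from the good set, select scales $(t_n, \varepsilon_n)$ with $\varepsilon_n \to 0$ on which concentration of $\scfl{\mu}{x}{t}$ towards $K$ is realised while still retaining a positive-density subset; then choose $H'$ to have localised bumps of height $\sim \varepsilon_n$ around $r = e^{-t_n}$ covering log-intervals of uniformly positive density. Since the only global regularity constraint on $H$ is $H'(0) = 0$ — which is satisfied because the bump heights $\varepsilon_n$ vanish — the function $H$ (and hence $f \in \diffk{1}$) always exists, and the bound $H(e^{-t})/e^{-t} \gtrsim \varepsilon_t$ holds on a positive-density subset of the good set, completing the argument.
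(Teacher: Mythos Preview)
Your overall architecture matches the paper's: restrict to a compact arc $K=\{(1-\beta)\delta_{-1}+\beta\delta_1:\beta_0\le\beta\le1-\beta_0\}$ with $P(K)>0$, pass to a diagonal subsequence along which a positive density of times $t\le T_k$ has scenery in shrinking neighbourhoods of $K$, and build $f$ adaptively---identity on $(-\infty,x]$, expansive on $(x,\infty)$ with derivative tending to $1$ at $x$---so that at those times the right-boundary mass is pushed out of the window. The paper does exactly this, with an explicit piecewise-linear prescription for $f'$.

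There is, however, a genuine gap at the averaging step. You assert $G(\scfl{\mu}{x}{t})-G(\scfl{(f\mu)}{x}{t})\ge c_0$ on the good set and then conclude that the time average over $[0,T_k]$ is at least $c_0 p/2$. This inference requires the integrand to be nonnegative off the good set, and your functional $G(\nu)=\int_I y\,d\nu$ does \emph{not} deliver this. For instance, if at some $t$ the scenery $\scfl{\mu}{x}{t}$ has an atom at $1/2$ with the rest of its mass in $[-1,0]$, then your $f$ moves that atom rightward \emph{within} the window while leaving the normalisation unchanged, so $G(\scfl{(f\mu)}{x}{t})>G(\scfl{\mu}{x}{t})$. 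Since you have no control over the scenery on the complement of the good set, such negative contributions can swamp the gain $c_0 p/2$.

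The paper closes this gap by taking instead $g(\nu)=\nu(\varphi)$ with $\chi_{[-1,-1/2]}\le\varphi\le\chi_{[-1,0]}$. Because $f$ is the identity on $(-\infty,x]$ while $f^{-1}(x+I_t)\subset x+I_t$, one has for \emph{every} $t\ge0$
\[
g\bigl(\scfl{(f\mu)}{x}{t}\bigr)\;=\;\frac{\mu(x+I_t)}{\mu\bigl(f^{-1}(x+I_t)\bigr)}\,g\bigl(\scfl{\mu}{x}{t}\bigr)\;\ge\;g\bigl(\scfl{\mu}{x}{t}\bigr),
\]
so the integrand $g(\scfl{(f\mu)}{x}{t})-g(\scfl{\mu}{x}{t})$ is globally nonnegative and the averaging is immediate. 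Your construction of $f$ is compatible with this fix; it is only the choice of test functional that fails.
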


\section{The Proof of Lemma \ref{TClemma}}
The proof of Lemma \ref{TClemma} goes via a few other lemmas.

\begin{lemma} \label{tubelemma}
Let $\mu \in \mathcal{M}(\rea)$ and let $\delta \geq 0$. Then
for $a \leq b$
        $$
        \int_a^b \mu([y - \delta, \, y + \delta]) \intd{y} \leq
        2 \delta \mu([a - \delta, \, b + \delta]).
        $$

\begin{proof}
Using Fubini's theorem
        \begin{align*} 
        \int_a^b &\mu([y - \delta, \, y + \delta]) \intd{y} =
        (\mu \times \lambda) (\{ (x, y); \, y - \delta \leq x \leq y + \delta, \, a \leq y \leq b \}) \leq \\
        &(\mu \times \lambda) (\{ (x, y); \, a - \delta \leq x \leq b + \delta,
                                                                   \, x - \delta \leq y \leq x + \delta \}) \stackrel{*}{=} \\
        &(\mu \times \lambda) (\{ (x, y); \, a - \delta \leq x \leq b + \delta,
                                                                   \, -\delta \leq y \leq \delta \}) = \\
        &2\delta \mu([a - \delta, \, b + \delta]),
        \end{align*}
using at $*$ that $\lambda$ is translation invariant.
\end{proof}
\end{lemma}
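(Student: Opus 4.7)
The plan is to express the integral on the left-hand side as the $(\mu\times\lambda)$-measure of a two-dimensional region and then bound this region by a larger ``tube'' on which the measure factors cleanly. First I would apply Fubini's theorem to rewrite
$$
\int_a^b \mu([y-\delta,\, y+\delta])\intd{y} = (\mu \times \lambda)(\Omega), \qquad
\Omega = \{(x,y) \in \rea^2 : a \leq y \leq b,\; |x-y| \leq \delta\}.
$$

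Next, I would observe that the constraints defining $\Omega$ force $x \in [a-\delta,\,b+\delta]$, so $\Omega$ is contained in the tube
$$
\widetilde\Omega = \{(x,y) \in \rea^2 : a - \delta \leq x \leq b + \delta,\; |x-y| \leq \delta\}.
$$
Now I would apply Fubini in the reverse order, integrating in $y$ first for each fixed $x$. By translation invariance of Lebesgue measure, the length of the $y$-slice $\{y : |y-x| \leq \delta\}$ is $2\delta$ independent of $x$, so $(\mu \times \lambda)(\widetilde\Omega) = 2\delta \cdot \mu([a-\delta,\, b+\delta])$, and the desired bound follows from $\Omega \subseteq \widetilde\Omega$.

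There is no serious obstacle here; the argument is the standard ``tube'' trick, and amounts to two careful applications of Fubini's theorem in opposite orders, together with translation invariance of $\lambda$. The only point worth emphasising is that enlarging $\Omega$ to a tube parallel to the diagonal is exactly what decouples the $y$-integral from $x$ and produces the constant factor $2\delta$, while simultaneously controlling the $x$-range by an interval whose $\mu$-measure appears on the right-hand side.
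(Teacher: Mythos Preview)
Your proof is correct and is essentially identical to the paper's argument: both express the integral as the $(\mu\times\lambda)$-measure of the diagonal strip $\Omega$, enlarge it to the tube $\widetilde\Omega$ with $x\in[a-\delta,b+\delta]$, and then use translation invariance of $\lambda$ (equivalently, Fubini in the other order) to evaluate $(\mu\times\lambda)(\widetilde\Omega)=2\delta\,\mu([a-\delta,b+\delta])$. The only cosmetic difference is that the paper records the translation-invariance step as an explicit change of $y$-variable to obtain a genuine rectangle before computing, whereas you compute the slice length directly.
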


\begin{lemma}   \label{hdeltalemma}
Suppose that $\mu \in \mathcal{M}(\rea)$ has $0 \in \supp \mu$ and satisfies the condition \eqref{TC} at $x = 0$.
Let $A \subset I$ be an interval containing $0$ and define for $\delta > 0$
        $$
        h_{\delta}(t) = \frac{\mu(A_{t - \delta})}{\mu(I_{t + \delta})}
        - \frac{\mu(A_{t + \delta})}{\mu(I_{t - \delta})}.
        $$
Then for any $\alpha > 0$
        $$
        \lim_{\delta \to 0} \limsup_{T \to \infty} \frac{1}{T}
        \lambda (\{ t \leq T; \, h_{\delta}(t) \geq \alpha \}) = 0.
        $$

\begin{proof}
Fix arbitrary $\gamma > 1$ and $K > 1$ and let
        $$
        B_{\gamma}^K = \left\{ t; \, \frac{\mu(\gamma I_{t})}{\mu(I_{t})} \geq K \right\}.
        $$
Choose $L = L(\gamma) > 0$ and $\delta_0 = \delta_0(\gamma) > 0$ so small that the inequalities
                $$
                e^{2\delta} \leq 1 + 3\delta, \quad
                3\delta \leq e^{-L}, \quad
                e^L(1 + 3\delta) \leq \gamma
                $$
are satisfied for all $\delta \in (0, \, \delta_0)$.

Take any $\delta \in (0, \delta_0)$. By Chebyshev's inequality
        $$
        \lambda(\{ t \leq T; \, h_{\delta}(t) \geq \alpha \}) \leq
        \lambda(B_{\gamma}^K \cap [0, \, T]) + 
        \frac{1}{\alpha} \int_{(B_{\gamma}^K)^c \cap [0, \, T]} h_{\delta} \intd{\lambda},
        $$
and the integral can be estimated by
        \begin{align*}
        \int_{(B_{\gamma}^K)^c \cap [0, \, T]} h_{\delta} \intd{\lambda}
        &=
        \int_{(B_{\gamma}^K)^c \cap [\delta, \, T + \delta]}
        \frac{\mu(A_{t - 2\delta})}{\mu(I_t)} \intd{\lambda(t)} -
        \int_{(B_{\gamma}^K)^c \cap [-\delta, \, T - \delta]}
        \frac{\mu(A_{t + 2\delta})}{\mu(I_t)} \intd{\lambda(t)} \\
        &\leq
        (K + 1)\delta +
        \int_{(B_{\gamma}^K)^c \cap [0, \, T]} \frac{\mu(A_{t - 2\delta}) -
        \mu(A_{t + 2\delta})}{\mu(I_t)} \intd{\lambda(t)},
        \end{align*}
using that for every $t \in (B_{\gamma}^K)^c$
        $$
        \frac{\mu(A_{t - 2\delta})}{\mu(I_t)} \leq K
        \quad \text{ and } \quad
        \frac{\mu(A_{t + 2\delta})}{\mu(I_t)} \leq 1
        $$
(the first inequality holds because $e^{2\delta} \leq \gamma$ so that
$A_{t - 2\delta} \subset e^{2\delta}I_t \subset \gamma I_t$).
Since $0 \in A$, the numerator in the last integral can be written as
        $$
        \mu(A_{t - 2\delta}) - \mu(A_{t + 2\delta}) =
        \mu(A_{t - 2\delta} \setminus A_{t + 2\delta}) =
        \mu(J_a(t)) + \mu(J_b(t)),
        $$
where $a$ and $b$ are the endpoints of $A$ and 
$J_c(t) = [ce^{-(t \pm 2\delta)}]$ for $c \in \{ a, \, b\}$. For each $c$,
        \begin{align*}
        &\int_{(B_{\gamma}^K)^c \cap [0, \, T]} \frac{\mu(J_c(t))}{\mu(I_t)} \intd{\lambda(t)} =
        \int_{(B_{\gamma}^K)^c \cap [0, \, T]}
        \frac{\mu(\gamma I_t)}{\mu(I_t)} \frac{\mu(J_c(t))}{\mu(\gamma I_t)} \intd{\lambda(t)} \leq \\
        K &\int_{(B_{\gamma}^K)^c \cap [0, \, T]} \frac{\mu(J_c(t))}{\mu(\gamma I_t)} \intd{\lambda(t)} \leq
        K \sum_{k = 0}^{\floor{T / L}} \frac{1}{\mu(\gamma I_{(k + 1)L})}
        \int_{kL}^{(k + 1)L} \mu(J_c(t)) \intd{t}.
        \end{align*}
Suppose that
$c > 0$ so that $J_c(t) = [ce^{-(t + 2\delta)}, \, ce^{-(t - 2\delta)}]$
(if $c < 0$ then the endpoints come in the opposite order, but the computations
are similar). Then for each $k$,
        \begin{align*}
        \int_{kL}^{(k + 1)L} \mu(J_c(t)) \intd{t} &= \left[ s = ce^{-t} \right] =
        \int_{ce^{-(k + 1)L}}^{ce^{-kL}} \frac{1}{s} \mu([se^{-2\delta}, \, se^{2\delta}]) \intd{s} \\
        &\leq
        c^{-1}e^{(k + 1)L} \int_{ce^{-(k + 1)L}}^{ce^{-kL}}
        \mu([s - 3\delta ce^{-kL} , \, s + 3\delta ce^{-kL}]) \intd{s} \\
        &\leq
        6\delta e^L
        \mu([ce^{-kL}(e^{-L} - 3\delta), \, ce^{-kL}(1 + 3\delta)]),
        \end{align*}
where the last inequality is by Lemma \ref{tubelemma}. By the choice of $L$ and $\delta_0$, the interval
that appears in the last member is included in $\gamma I_{(k + 1)L}$, so
        $$
        \frac{1}{\mu (\gamma I_{(k + 1)L})} \int_{kL}^{(k + 1)L} \mu(J_c(t)) \intd{t}
        \leq 6 \delta e^L.
        $$
Thus
        \begin{align*}
        &\limsup_{T \to \infty} \frac{1}{T} \lambda (\{ t \leq T; \, h_{\delta}(t) \geq \alpha \}) \leq \\
        &\limsup_{T \to \infty} \frac{1}{T} \left(
        \lambda(B_{\gamma}^K\cap [0, \, T]) + \frac{1}{\alpha}
        \left(
        (K + 1) \delta + \frac{T}{L} 12 K \delta e^L
        \right) \right)
        = \\
        &\limsup_{T \to \infty} \frac{1}{T} \lambda(B_{\gamma}^K \cap [0, \, T])
        + \frac{12Ke^L}{\alpha L} \delta.
        \end{align*}
Letting $\delta \to 0$ gives
        $$
        \limsup_{\delta \to 0} \limsup_{T \to \infty} \frac{1}{T} \lambda (\{ t \leq T; \, h_{\delta}(t) \geq \alpha \}) \leq
        \limsup_{T \to \infty} \frac{1}{T} \lambda(B_{\gamma}^K \cap [0, \, T]),
        $$
and since this holds for any $\gamma > 1$ and $K > 1$ the lemma follows from \eqref{TC}
when $\gamma \to 1^+$ and $K \to \infty$.
\end{proof}
\end{lemma}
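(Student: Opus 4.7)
My plan is to prove Lemma~\ref{hdeltalemma} by splitting the time interval $[0,T]$ into the \emph{bad} set $B_\gamma^K = \{t:\mu(\gamma I_t)/\mu(I_t) \geq K\}$, whose density is controlled by the hypothesis~\eqref{TC}, and its complement, on which the normalising denominators behave tamely. On the bad set we discard everything trivially. On the good set, Chebyshev's inequality reduces the problem to an $L^1$-bound on $h_\delta$, so the heart of the argument is to show that
\[
\int_{(B_\gamma^K)^c \cap [0,T]} h_\delta \intd{\lambda} = O(\delta T) + \text{(small error)}
\]
with the implicit constant depending only on $\gamma, K$ and some chosen scale $L$.

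To obtain this bound I would first absorb the $\delta$-shifts in the denominators into shifts in the numerators: after the substitution $s = t \pm \delta$, the fraction $\mu(A_{t-\delta})/\mu(I_{t+\delta})$ becomes $\mu(A_{s-2\delta})/\mu(I_s)$, and similarly for the other term. Up to boundary contributions of order $\delta$ coming from the two small intervals of length $\delta$ where the integration ranges disagree (easily bounded since on $(B_\gamma^K)^c$ each of these fractions is at most $K$ respectively $1$ once $e^{2\delta} \leq \gamma$), the integral of $h_\delta$ reduces to the integral of
\[
\frac{\mu(A_{t-2\delta}) - \mu(A_{t+2\delta})}{\mu(I_t)} = \frac{\mu(A_{t-2\delta}\setminus A_{t+2\delta})}{\mu(I_t)},
\]
where I used that $0 \in A$ so $A_{t+2\delta} \subset A_{t-2\delta}$. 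The numerator is the measure of two thin "shells" near the rescaled endpoints of $A$.

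Next I would chop $[0,T]$ into dyadic-like pieces $[kL,(k+1)L]$ for some scale $L$ to be chosen, and estimate the contribution of each shell by a change of variables $s = ce^{-t}$ (where $c$ is an endpoint of $A$). This turns the $t$-integral of the shell measure into a $\frac{1}{s}$-weighted integral of $\mu([se^{-2\delta},se^{2\delta}])$, which after replacing $\frac{1}{s}$ by its maximum on the piece is ready for Lemma~\ref{tubelemma}. Applying the tube lemma converts the integral of a thin neighbourhood into $O(\delta)$ times the mass of a slightly enlarged interval, and if $L$ and $\delta_0$ are chosen so that this enlarged interval is contained in $\gamma I_{(k+1)L}$, the denominator $\mu(\gamma I_{(k+1)L})$ cancels nicely. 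Summing over the roughly $T/L$ pieces then yields a bound of the form $(\text{const})(K,L)\cdot \delta T$ for the integral.

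Putting the pieces together via Chebyshev and dividing by $T$ gives
\[
\limsup_{T\to\infty} \frac{1}{T}\lambda(\{t\leq T:h_\delta(t)\geq\alpha\})
\leq \limsup_{T\to\infty} \frac{1}{T}\lambda(B_\gamma^K\cap[0,T]) + C(\gamma,K)\delta,
\]
after which sending $\delta \to 0$ kills the error term, and the hypothesis \eqref{TC} lets us send $\gamma\to 1^+$ and $K\to\infty$ to conclude. The main obstacle I anticipate is the bookkeeping: making sure the constants $L$, $\delta_0$, $\gamma$ are compatible so that the tube-lemma-enlarged interval $[ce^{-kL}(e^{-L}-3\delta),\, ce^{-kL}(1+3\delta)]$ fits inside $\gamma I_{(k+1)L}$, and correctly handling both endpoints of $A$ (the case of a negative endpoint is symmetric but requires attention to the orientation of the shell interval).
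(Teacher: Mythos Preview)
Your proposal is correct and follows essentially the same approach as the paper's proof: the split into $B_\gamma^K$ and its complement, Chebyshev's inequality, the substitution $s = t \pm \delta$ to move the shifts into the numerator, the shell decomposition using $0\in A$, the partition of $[0,T]$ into pieces of length $L$, the change of variables $s = ce^{-t}$, the application of the tube lemma, and the final order of limits all match the paper exactly. Even the compatibility constraints you anticipate on $L$, $\delta_0$, and $\gamma$ (so that the enlarged interval fits in $\gamma I_{(k+1)L}$) are precisely those imposed in the paper.
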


\begin{lemma}   \label{difflemma}
Suppose that $\mu \in \mathcal{M}(\rea)$ satisfies the condition \eqref{TC} at some $x \in \supp \mu$.
Then for any $\varphi \in C(I)$ and any $\alpha > 0$,
        \begin{equation}        \label{badtimeseq}
        \lim_{T \to \infty} \frac{1}{T} \lambda(B(\varphi, \alpha) \cap [0, \, T]) = 0,
        \end{equation}
where
        $$
        B(\varphi, \alpha) = 
        \left\{ t; \, \left|\scfl{\mu}{x}{t}(\varphi) - \scfl{(f\mu)}{f(x)}{t - s}(\varphi)\right|
        \geq \alpha \right\}
        $$
and $s = \log f'(x)$.

\begin{proof}
After a translation of the real line it may be assumed that $x = 0$.
The diffeomorphism $f$ can be written on the form $f(x) = f(0) + f'(0)f_0(x)$
where $f_0$ is a diffeomorphism with $f_0(0) = 0$ and $f_0'(0) = 1$. Since
        $
        \scfl{(f\mu)}{f(0)}{t - s} = \scfl{(f_0 \mu)}{0}{t},
        $
it is enough to prove the lemma with $f_0$ in place of $f$. It will first
be shown that \eqref{badtimeseq} holds for $\varphi = \chi_A$,
where $A \subset I$ is an interval containing $0$.

Now, $f_0^{-1}$ has the form $f_0^{-1}(x) = x(1 + r(x))$ where $\lim_{x \to 0} r(x) = 0$.
Let $R(t) = \sup_{\tau \geq t} |r(e^{-\tau})|$. Then from the inclusions
        \begin{align*}
        (1 - R(t))A_t \subset A_t&, \, f^{-1}(A_t) \subset (1 + R(t))A_t \\
        (1 - R(t))I_t \, \subset \, I_t&, \, f^{-1}(I_t) \, \subset (1 + R(t))I_t
        \end{align*}
it follows that
        $$
        \frac{\mu((1 - R(t))A_t)}{\mu((1 + R(t))I_t)}
        \leq
        \scfl{\mu}{0}{t}(A), \,
        \scfl{(f\mu)}{0}{t}(A)
        \leq
        \frac{\mu((1 + R(t))A_t)}{\mu((1 - R(t))I_t)},
        $$
and thus
        \begin{align}
        \left| \scfl{\mu}{0}{t}(A) - \scfl{(f\mu)}{0}{t}(A) \right|
        \leq
        \frac{\mu((1 + R(t))A_t)}{\mu((1 - R(t))I_t)}
        -
        \frac{\mu((1 - R(t))A_t)}{\mu((1 + R(t))I_t)}.
        \label{Rrhs}
        \end{align}
Let $\varepsilon > 0$ and define $h_{\delta}$ as in Lemma \ref{hdeltalemma}. Then there
is a $\delta_1 > 0$ such that
        $$
        \limsup_{T \to \infty} \frac{1}{T} \lambda (\{ t \leq T; \, h_{\delta_1}(t) \geq \alpha \}) < \varepsilon.
        $$
Since $\lim_{t \to \infty} R(t) = 0$, there is then a $T_1$ such that the right member of \eqref{Rrhs}
is less than $h_{\delta_1}(t)$ for $t > T_1$. Thus
        \begin{align*}
        &\limsup_{T \to \infty} \frac{1}{T} \lambda(\{ B(\chi_A, \alpha) \cap [0, \, T] \})
        \leq \\
        &\limsup_{T \to \infty} \frac{1}{T} \left( \lambda([0, \, T_1])
        + \lambda (\{ t \leq T; \, h_{\delta_1}(t) \geq \alpha \}) \right) < \varepsilon.
        \end{align*}
This holds for any $\varepsilon > 0$, so \eqref{badtimeseq} is proven for $\varphi = \chi_A$.

It is not too hard to see that the set of $\varphi$ that satisfy \eqref{badtimeseq} for all $\alpha > 0$
is a linear space. If $A \subset I$ is an interval that does not contain $0$, then $A$ is the difference
of two intervals that do, so \eqref{badtimeseq} holds for $\varphi = \chi_A$ for
\emph{any} interval $A \subset I$. Thus \eqref{badtimeseq} holds whenever $\varphi$ is a step function,
and the lemma follows since any $\varphi \in C(I)$ can be uniformly approximated by step functions.
\end{proof}
\end{lemma}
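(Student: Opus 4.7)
My plan is to reduce Lemma~\ref{difflemma} to Lemma~\ref{hdeltalemma} through a series of approximations, starting from indicators of intervals containing the base point and extending outward to arbitrary $\varphi\in C(I)$. First I would perform two preliminary reductions: translate the real line so that $x=0$, and factor $f(y)=f(0)+f'(0)f_0(y)$ with $f_0(0)=0$ and $f_0'(0)=1$, using $\scfl{(f\mu)}{f(0)}{t-s}=\scfl{(f_0\mu)}{0}{t}$ to replace $f$ by $f_0$ throughout. Because $f_0'(0)=1$, the inverse has the form $f_0^{-1}(y)=y(1+r(y))$ with $r(y)\to 0$ as $y\to 0$, so setting $R(t)=\sup_{\tau\geq t}|r(e^{-\tau})|$ yields $R(t)\downarrow 0$.

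The core of the argument is the case $\varphi=\chi_A$ where $A\subset I$ is an interval containing $0$. For $y\in A_t\cup I_t$ one has $|r(y)|\leq R(t)$; since $A$ and $I$ are intervals containing $0$, this gives the inclusions $(1-R(t))A_t\subset f_0^{-1}(A_t)\subset(1+R(t))A_t$ and likewise for $I_t$. Hence both $\scfl{\mu}{0}{t}(A)$ and $\scfl{(f_0\mu)}{0}{t}(A)$ are sandwiched between $\mu((1-R(t))A_t)/\mu((1+R(t))I_t)$ and $\mu((1+R(t))A_t)/\mu((1-R(t))I_t)$. Choosing $\delta(t)>0$ with $1+R(t)\leq e^{\delta(t)}$ and $1-R(t)\geq e^{-\delta(t)}$, the difference of these two bounds is at most $h_{\delta(t)}(t)$ in the notation of Lemma~\ref{hdeltalemma}. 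Given $\alpha,\varepsilon>0$, Lemma~\ref{hdeltalemma} supplies a $\delta>0$ with $\limsup_{T\to\infty}T^{-1}\lambda(\{t\leq T;\,h_\delta(t)\geq\alpha\})<\varepsilon$; since $\delta(t)\leq\delta$ for $t$ sufficiently large and $h_\delta(t)$ is monotone in $\delta$, it follows that $h_{\delta(t)}(t)\leq h_\delta(t)$ for such $t$. This yields density estimate $\varepsilon$ for $B(\chi_A,\alpha)$, and letting $\varepsilon\to 0$ completes that case.

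For the extension, any subinterval of $I$ is the difference of two intervals containing $0$, so \eqref{badtimeseq} holds for indicators of arbitrary subintervals of $I$ after halving the threshold. The set of $\varphi$ satisfying \eqref{badtimeseq} for every $\alpha>0$ is a linear subspace of $C(I)$ (again by splitting thresholds), hence contains every step function. For a general $\varphi\in C(I)$ I would approximate uniformly by a step function $\psi$ with $\|\varphi-\psi\|_\infty<\alpha/3$; since elements of $\mathcal{M}^\square$ are probability measures, $|\nu(\varphi)-\nu(\psi)|<\alpha/3$ for every $\nu$, so $B(\varphi,\alpha)\subset B(\psi,\alpha/3)$, whose density is zero by the step-function case.

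The hard part will be the indicator case: converting the multiplicative distortion $f_0^{-1}(y)=y(1+r(y))$ into precisely the ratios defining $h_\delta(t)$ requires attention to signs and to the fact that $0\in A$ (needed so that $A_t$ and $(1\pm R(t))A_t$ are nested). Once that sandwich is in place, Lemma~\ref{hdeltalemma} does the asymptotic work, and the passage from intervals containing $0$ to arbitrary intervals, step functions, and continuous $\varphi$ is routine functional-analytic bookkeeping with thresholds.
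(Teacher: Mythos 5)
Your proposal is correct and follows essentially the same route as the paper's proof: reduce to $f_0$ with $f_0(0)=0$, $f_0'(0)=1$, sandwich $\scfl{\mu}{0}{t}(A)$ and $\scfl{(f_0\mu)}{0}{t}(A)$ between the ratios $\mu((1\mp R(t))A_t)/\mu((1\pm R(t))I_t)$, dominate the gap by $h_{\delta}(t)$ for large $t$ via Lemma~\ref{hdeltalemma}, and then extend by linearity, differences of intervals containing $0$, step functions, and uniform approximation. The only differences are cosmetic (you make the monotonicity of $h_\delta$ in $\delta$ and the threshold bookkeeping explicit, which the paper leaves implicit).
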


The uniform structure of $\mathcal{M}^{\square}$ will be used in the proof of Lemma
\ref{TClemma} --- therefore some facts about uniform spaces are recalled first.
%More
%details can be found in \cite{bourbaki1} (definition of uniform space and basic properties),
%\cite{bourbaki2} (uniformities from pseudometrics) and \cite{conway1} (weak-$*$ topology
%from seminorms).
A \emph{uniform space} is a set $X$ with a notion of uniform closeness.
This is represented by \emph{entourages}, which are subsets of $X \times X$ satisfying
some appropriate axioms. The idea is that an entourage $V$ should contain all pairs
of points that are ``$V$-close'' to each other. A \emph{fundamental system of entourages}
is a set $\Phi$ of entourages such that any entourage includes an entourage from $\Phi$.
As an example, if $X$ has a metric $d$ then a fundamental system of entourages
for the metric uniformity on $X$ is given by the sets
	$$
	\{ (x_1, x_2); \, d(x_1, x_2) < \alpha \}, \qquad \alpha > 0;
	$$
such an entourage represents all pairs of points that are $\alpha$-close.
The uniform structure induces a topology on $X$, namely the one where the neighbourhood
filter at $x$ consists of the sets $\{ y; \, (x, y) \in V \}$ as $V$ runs through all
entourages.

If $X$ and $Y$ are uniform spaces, then a function $f: X \to Y$ is
\emph{uniformly continuous} if for any entourage $W$ of $Y$ there is an entourage $V$ of $X$
such that $(f(x_1), f(x_2)) \in W$ whenever $(x_1, x_2) \in V$. It can be shown that
any uniformly continuous function is continuous, and that any continuous function from
a compact uniform space to a uniform space is uniformly continuous.

The weak-$*$ topology on $\mathcal{M}^{\square}$ is the initial topology with respect
to the maps
        $$
        \nu \mapsto
        %\left|
        \int \varphi \intd{\nu}
        %\right|
        , \qquad \varphi \in C(I).
        $$
This topology is also induced by the uniformity determined by the fundamental system of
entourages consisting of all finite intersections of sets of the form
        $$
        \left\{ (\nu_1, \nu_2); \, \left|
        \int \varphi \intd{\nu_1} - \int \varphi \intd{\nu_2}
        \right| < \alpha \right\},
        $$
where $\alpha > 0$ and $\varphi \in C(I)$.

\begin{proof}[Proof of Lemma \ref{TClemma}]
Take any $g \in C(\mathcal{M}^{\square})$ --- then $g$ is bounded and uniformly
continuous since $\mathcal{M}^{\square}$ is compact. To prove the lemma, it
is enough to show that
        $$
        \lim_{T \to \infty} \frac{1}{T} \int_{0}^T
        \left|g\left(\scfl{\mu}{x}{t}\right) - g\left(\scfl{(f\mu)}{f(x)}{t - s}\right) \right|
        \intd{t} = 0,
        $$
where $s = \log f'(x)$.

Let $\varepsilon > 0$. Then there is an entourage $V$ of $\mathcal{M}^{\square}$ such
that $|g(\nu_1) - g(\nu_2)| < \varepsilon$ whenever $(\nu_1, \nu_2)$ is in $V$. Now, $V$
includes a set of the form $\cap_{i = 1}^{n} V_i$, where
        $$
        V_i = \left\{ (\nu_1, \nu_2); \, \left|
        \nu_1(\varphi_i) - \nu_2(\varphi_i)
        \right| < \alpha \right\},
        $$
for some $\alpha > 0$ and some $\varphi_1, \ldots, \varphi_n \in C(I)$,
since the sets of this form constitute a fundamental system of entourages of the
uniformity on $\mathcal{M}^{\square}$. Let
        $$
        B = \cup_{i = 1}^{n} \{ t; \, \scfl{\mu}{x}{t} \in V_i^c \}.
        $$
Then
        \begin{align*}
        &\frac{1}{T} \int_{0}^{T}
        \left|g\left(\scfl{\mu}{x}{t}\right) - g\left(\scfl{(f\mu)}{f(x)}{t - s}\right)\right|
        \intd{t}
        \leq \\
        &\frac{\|g\|_{\infty}}{T} \lambda(B\cap[0, \, T]) +
        \frac{1}{T} \int_{B^c \cap [0, \, T]}
        \left| g\left(\scfl{\mu}{x}{t}\right) - g\left(\scfl{(f\mu)}{f(x)}{t - s}\right) \right| 
        \intd{\lambda(t)}
        \leq \\
        &\frac{\|g\|_{\infty}}{T} \lambda(B\cap[0, \, T]) + \varepsilon.
        \end{align*}
When $T \to \infty$, the last expression goes to
$\varepsilon$ by Lemma \ref{difflemma}, and since $\varepsilon$ is arbitrary
the lemma follows.
\end{proof}

\section{Motivating Examples}	\label{examplessec}
This section presents two examples of what can go wrong in Lemma 2.3 to
motivate the altered formulation given in Lemma \ref{TClemma}, where the
conclusion is changed from ``\ldots then $\scfl{\mu}{x}{t}$ and $\scfl{(f\mu)}{f(x)}{t - s}$
are asymptotic'' to ``\ldots then $\scdi{\mu}{x}{T}$ and $\scdi{f\mu}{f(x)}{T}$ are asymptotic''
and an extra condition is added to the hypothesis.

\begin{example}	\label{example1}
Let
	$$
	\mu = \sum_{k = 1}^{\infty} 2^{-k} \delta_{2^{-k}}
	$$
and define $f \in \diffk{1}$ by
	$$
	f^{-1}(x) = x - x^3;
	$$
then $f(0) = 0$ and $f'(0) = 1$.
For all $n$,
	$$
	\mu([-2^{-n}, \, 2^{-n}]) =
	\sum_{k = n}^{\infty} 2^{-k} = 2^{-(n - 1)}
	$$
and
	$$
	\mu([f^{-1}(-2^{-n}), \, f^{-1}(2^{-n})]) =
	\sum_{k = n + 1}^{\infty} 2^{-k} = 2^{-n},
	$$
since $2^{-(n + 1)} < f^{-1}(2^{-n}) < 2^{-n}$.
Also for all $n$ and all $c$ in some small neighbourhood of $3 / 4$,
	$$
	2^{-(n + 1)} < f^{-1}(c2^{-n}) < c2^{-n} < 2^{-n},
	$$
and thus
	$$
	\mu([0, \, f^{-1}(c2^{-n})]) = \sum_{k = n + 1}^{\infty} 2^{-k} =
	\mu([0, \, c2^{-n}]).
	$$
By considering $\varphi \in C(I)$ such that
$\chi_{[-1, \, 3 / 4]} \leq \varphi \leq \chi_{[-1, \, 3 / 4 + \varepsilon]}$
for some small $\varepsilon$, one sees that $\scfl{\mu}{0}{t}$ and $\scfl{(f\mu)}{0}{t}$ are
not asymptotic, since for $t_n = n\log2$,
	\begin{align*}
	\scfl{(f\mu)}{0}{t_n}(\varphi) - \scfl{\mu}{0}{t_n}(\varphi) &\geq
	\scfl{(f\mu)}{0}{t_n}([0, \, 3 / 4]) - \scfl{\mu}{0}{t_n}([0, \, 3 / 4 + \varepsilon]) \\
	&=
	\frac{2^{-n}}{2^{-n}} - \frac{2^{-n}}{2^{-(n - 1)}}= \frac{1}{2}.
	\end{align*}
\end{example}

The example still works if the weight of the $k$:th point mass is changed to
$w^{-k}$ for any $w > 1$. The local dimension of $\mu$ at $0$ then becomes
$\log w / \log 2$, which can take any value in $(0, \, \infty)$.
It would also be possible to replace the point masses by measures concentrated
on the intervals $(f^{-1}(2^{-k}), \, 2^{-k})$. Thus one could give $\mu$ any
prescribed (``global'') dimension, for example by putting a suitable Cantor measure
on each interval. In particular, it is possible to make $\mu$ non-atomic --- in
this case the map $t \mapsto \scfl{\mu}{0}{t}$ is continuous but changes rapidly near
each $t_n$.

In Example \ref{example1}, the measures $\scfl{\mu}{x}{t}$ and
$\scfl{(f\mu)}{f(x)}{t - s}$ are far apart only for very short time periods,
and it is still true that $\scdi{\mu}{x}{T}$ and $\scdi{f\mu}{f(x)}{T}$ are asymptotic.
The next example shows that even this can fail if $\scfl{\mu}{x}{t}$ has a lot of
mass near $\{ \pm1 \}$ for \emph{all} $t$. It is a special case of Proposition \ref{failproposition}.

\begin{example}	\label{example3}
Let $\mu$ be the measure on \rea given by the density function $g(x) = \frac{1}{x^2}e^{-\frac{1}{|x|}}$,
so that
	$$
	\mu(\pm[0, x]) = e^{-\frac{1}{x}} =: G(x) \quad \text{for } x > 0
	$$
(and $\mu(\{ 0 \}) = 0$). If $c \in (0, \, 1)$ then
	$$
	\lim_{x \to 0^+} \frac{G(cx)}{G(x)} =
	\lim_{x \to 0^+} \exp \left( -\frac{1}{x}\left( \frac{1}{c} - 1 \right) \right) = 0,
	$$
so for $c \in (0, \, 1]$
	$$
	\scfl{\mu}{0}{t}(\pm[0, \, c]) = \frac{G(ce^{-t})}{2G(e^{-t})} \to
	\begin{cases}
	0			& \text{ if } c < 1 \\
	\frac{1}{2}	& \text{ if } c = 1
	\end{cases},
	\quad
	t \to \infty.
	$$
Thus
	$$
	\lim_{t \to \infty} \scfl{\mu}{0}{t} = \frac{1}{2}\left( \delta_{-1} + \delta_{1} \right)
%	\quad (\text{weak-}*)
	.
	$$

Define $f \in \diffk{1}$ by letting
	$$
	f^{-1}(x) =
	\begin{cases}
	x 		& \text{ if } x \leq 0 \\
	x + x^2 &	\text{ if } x > 0;
	\end{cases}
	$$
then $f(0) = 0$ and $f'(0) = 1$. For $c \in (0, \, 1)$,
	$$
	\scfl{(f\mu)}{0}{t}(\pm[0, \, c]) %\leq \scfl{(f\mu)}{0}{t}([0, \, c])
	\leq
	\frac{G(ce^{-t} + c^2e^{-2t})}{2G(e^{-t})} \leq
	\frac{G(\sqrt{c}e^{-t})}{G(e^{-t})} \to 0, \quad t \to \infty,
	$$
where the second inequality holds for large enough $t$. From
	$$
	\frac{G(x + x^2)}{G(x)} =
	\exp \left(\frac{1}{x} - \frac{1}{x + x^2} \right) =
	\exp \left( \frac{1}{1 + x} \right) \to e,
	\quad x \to 0
	$$
it follows that
	$$
	\scfl{(f\mu)}{0}{t}([-1, \, 0]) = \frac{G(e^{-t})}{G(e^{-t}) + G(e^{-t} + e^{-2t})}
	\to \frac{1}{1 + e}, \quad t \to \infty.
	$$
Thus
	$$
	\lim_{t \to \infty} \scfl{(f\mu)}{0}{t} = \frac{1}{1 + e} \delta_{-1} + \frac{e}{1 + e} \delta_1
%	\quad (\text{weak-}*)
	.
	$$

Since $\scfl{\mu}{0}{t}$ and $\scfl{(f\mu)}{0}{t}$ converge to different measures they are not asymptotic,
and in fact they generate two different point mass distributions.
\end{example}

\section{The Implications of Lemma \ref{TClemma}}	\label{applicationsection}
Here we will prove Propositions \ref{fintedimproposition}, \ref{succeedproposition} and \ref{failproposition}
and then discuss the gap between Propositions~\ref{succeedproposition} and \ref{failproposition}.

\begin{proof}[Proof of Proposition~\ref{fintedimproposition}]
It follows from $\udim{\mu}{x} < \infty$ that $x \in \supp \mu$. For $\gamma > 1$ let
	$$
	B_{\gamma} = \left\{ t; \, \frac{\mu(x + \gamma I_t)}{\mu(x + I_t)} \geq 2 \right\}.
	$$

If $h: [0, \infty) \to [0, \infty)$ is any decreasing function and $\varepsilon$, $T$ and $\alpha$ are
positive numbers, then
	$$
	\lambda(\{ t \in [\varepsilon, \, T]; \, f(t - \varepsilon) - f(t) \geq \alpha \}) \leq
	\frac{\varepsilon}{\alpha}(f(0) - f(T)).
	$$
Applying this to $h(t) = \log(\mu(x + I_t))$ and $\varepsilon = \log \gamma$ gives 
(since $\gamma I_{t} = I_{t - \log \gamma}$)
	\begin{align*}
	\limsup_{T \to \infty} \frac{1}{T} \lambda (B_{\gamma} \cap [0, \, T]) &\leq
	\limsup_{T \to \infty} \frac{\log \gamma}{T}
	\left(
	1 + \frac{\log(\mu(x + I_0)) - \log(\mu(x + I_{T}))}{\log 2}
	\right) \\
	&=
	\limsup_{T \to \infty} \frac{\log \gamma}{\log 2} \frac{-\log(\mu(x + I_T))}{T}
	= \frac{\log \gamma}{\log 2} \udim{\mu}{x}.
	\end{align*}
Letting $\gamma \to 1^+$ shows that the condition \eqref{TC} is satisfied, so the proposition follows
by Lemma \ref{TClemma}.
\end{proof}

The following general fact about the weak-$*$ topology will be used to prove Proposition
\ref{succeedproposition} and Proposition \ref{failproposition}.

\begin{lemma}	\label{opensetlemma}
Let $X$ be a locally compact Hausdorff space that has a countable base for the topology,
and let $\mathcal{P}(X)$ be the space of Borel probability measures on $X$ with the
weak-$*$ topology. Let $V \subset X$ be open and let $c \in \rea$. Then the set
	$$
	\{ \nu \in \mathcal{P}(X); \, \nu(V) > c \}
	$$
is open.

\begin{proof}
Suppose that $\nu(V) > c$. Any finite Borel measure on a locally compact space with a
countable base is inner regular
%(see for example Proposition 7.2.3 in \cite{cohn1})
,
so there is a compact set $K \subset V$ such that $\nu(K) > c$. By Urysohn's lemma,
there is then a continuous function $\varphi: X \to \rea$ such that
$\chi_K \leq \varphi \leq \chi_V$. Thus $\{ \nu; \, \nu(\varphi) > c \}$ is
an open set that contains $\nu$ and is included in $\{ \nu; \, \nu(V) > c \}$,
which proves the lemma.
\end{proof}
\end{lemma}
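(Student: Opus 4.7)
The plan is to verify openness pointwise: given $\nu$ with $\nu(V) > c$, I will produce a weak-$*$ open neighborhood of $\nu$ entirely contained in $\{\nu'; \, \nu'(V) > c\}$. Since $\chi_V$ is only lower semi-continuous, the map $\nu' \mapsto \nu'(V)$ itself is not weak-$*$ continuous; the standard fix is to approximate $\chi_V$ from below by a genuine continuous function that is still large enough on the mass of $\nu$ to keep $\nu$ on the correct side of the inequality.

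Concretely, I would proceed in three steps. First, invoke that every finite Borel measure on $X$ is Radon --- this holds because $X$ is a locally compact Hausdorff space with a countable base --- and apply inner regularity on the open set $V$ to obtain a compact $K \subset V$ with $\nu(K) > c$. Second, use the locally compact version of Urysohn's lemma to produce a continuous $\varphi : X \to [0, 1]$ with compact support contained in $V$ and satisfying $\varphi \equiv 1$ on $K$, so that $\chi_K \leq \varphi \leq \chi_V$. Third, observe that $U = \{\nu' \in \mathcal{P}(X); \, \int \varphi \intd{\nu'} > c\}$ is weak-$*$ open, since integration against the bounded continuous function $\varphi$ is by definition a continuous functional on $\mathcal{P}(X)$; moreover $U$ contains $\nu$ because $\int \varphi \intd{\nu} \geq \nu(K) > c$, and $U \subset \{\nu'; \, \nu'(V) > c\}$ because $\varphi \leq \chi_V$ implies $\int \varphi \intd{\nu'} \leq \nu'(V)$.

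The main obstacle, such as it is, is the regularity assertion in the first step: one needs the topological hypotheses on $X$ to force every finite Borel measure to be inner regular on the open set $V$, so that the compact set $K$ can actually be produced. This is a standard consequence of $\sigma$-compactness combined with second countability, and it is the only place where the hypotheses on $X$ are genuinely used. The remainder of the argument is a packaging exercise combining Urysohn's lemma with the definition of the weak-$*$ topology, and it does not present any real difficulty.
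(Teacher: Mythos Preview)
Your proposal is correct and follows essentially the same approach as the paper: inner regularity to find a compact $K\subset V$ with $\nu(K)>c$, Urysohn's lemma to produce $\varphi$ with $\chi_K\le\varphi\le\chi_V$, and then the weak-$*$ basic open set $\{\nu';\,\nu'(\varphi)>c\}$ as the desired neighborhood. The only difference is that you spell out a few details (compact support of $\varphi$, the inequalities $\int\varphi\,d\nu\ge\nu(K)$ and $\int\varphi\,d\nu'\le\nu'(V)$) that the paper leaves implicit.
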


\begin{proof}[Proof of Proposition~\ref{succeedproposition}]
It will be shown that the condition \eqref{TC} is satisfied, so that the proposition
follows from Lemma \ref{TClemma}. After a translation of the real line it may
be assumed that $x = 0$.

Let $\Omega$ be the $\omega$-limit set of $\scdi{\mu}{0}{T}$. Then for every open set $V$
that includes $\Omega$, there is a $T_V$ such that $\scdi{\mu}{0}{T} \in V$ for all
$T \geq T_V$. For if this was not the case, then there would be a sequence $T_k \to \infty$
such that $\scdi{\mu}{0}{T_k} \in V^c$ for all $k$. Since the space of distributions
is compact there would be a converging subsequence, and the limit would lie in
$\Omega \cap V^c$, contradicting the assumption that $\Omega \subset V$.

For any $\gamma > 1$ and $K > 1$, let
	$$
	C_{\gamma}^K = \left\{
	\nu \in \mathcal{M}^{\square}; \,
	\nu\left( \left( -\frac{1}{\gamma}, \, \frac{1}{\gamma} \right)\right) \leq \frac{1}{K}
	\right\};
	$$
this is a closed set by Lemma \ref{opensetlemma}. Let
$\varepsilon_{\gamma}^K = \sup_{P \in \Omega} P(C_{\gamma}^K)$.
By Lemma \ref{opensetlemma} again, the set
	$$
	\left\{ P; \, P(C_{\gamma}^K) < \varepsilon_{\gamma}^K + \frac{1}{K} \right\} =
	\left\{ P; \, P((C_{\gamma}^K)^c) > 1 - \left( \varepsilon_{\gamma}^K + \frac{1}{K} \right) \right\}
	$$
is an open set of distributions. Since it includes $\Omega$ (the reason for having the term $\frac{1}{K}$ is to make
this true in case $\varepsilon_{\gamma}^K = 0$), it contains $\scdi{\mu}{0}{T}$ for all large enough $T$.
Hence,
	\begin{align*}
	\varepsilon_{\gamma}^K + \frac{1}{K} \geq \limsup_{T \to \infty} \scdi{\mu}{0}{T}(C_{\gamma}^K)
	&\stackrel{*}{=}
	\limsup_{T \to \infty} \frac{1}{T} \lambda\left(
	\left\{ t \leq T; \, \scfl{\mu}{0}{t}\left(\frac{1}{\gamma} I \right) \leq \frac{1}{K} \right\}
	\right) \\
	&=
	\limsup_{T \to \infty} \frac{1}{T} \lambda\left(
	\left\{ t \leq T; \, \frac{\mu(\frac{1}{\gamma} I_t )}{\mu(I_t)} \leq \frac{1}{K} \right\}
	\right) \\
	&=
	\limsup_{T \to \infty} \frac{1}{T} \lambda\left(
	\left\{ t \leq T; \, \frac{\mu(I_t )}{\mu(\gamma I_t)} \leq \frac{1}{K} \right\}
	\right) \\
	&=
	\limsup_{T \to \infty} \frac{1}{T} \lambda\left(
	\left\{ t \leq T; \, \frac{\mu(\gamma I_t )}{\mu(I_t)} \geq K \right\}
	\right)
	\end{align*}
(at $*$, the open interval $\frac{1}{\gamma} (-1, \, 1)$ is replaced by the closed interval
$\frac{1}{\gamma} I$. This does not make any difference since the set of $t$ for which
$\scfl{\mu}{0}{t}$ gives positive measure to $\{ \pm \frac{1}{\gamma} \}$ has Lebesgue measure $0$).
Thus to prove the proposition, it suffices to show that
$\lim_{K \to \infty} \limsup_{\gamma \to 1^+} \varepsilon_{\gamma}^K = 0$.

So take any $\varepsilon > 0$. Since $C_{\gamma}^K$ decreases if $\gamma$ decreases or
$K$ increases and $P(\bigcap_{K, \gamma} C_{\gamma}^K) = 0$ for any $P \in \Omega$,
the sets $\{\{ P; \, P(C_{\gamma}^K) < \varepsilon \}\}_{\gamma, K > 1}$
cover $\Omega$. Since they are open and $\Omega$ is compact there is a finite subcover,
say
	$$
	\Omega \subset \bigcup_{k = 1}^n \{ P; \, P(C_{\gamma_k}^{K_k}) < \varepsilon \}.
	$$
Now, if $\gamma$ decreases or $K$ increases then
$\{ P; \, P(C_{\gamma}^K) < \varepsilon \}$ \mbox{\emph{in}creases}. Thus $\Omega$
is in fact covered by $\{ P; \, P(C_{\gamma}^K) < \varepsilon \}$ whenever
$\gamma \leq \min_k \gamma_k$ and $K \geq \max_k K_k$. Hence
	$$
	\limsup_{K \to \infty} \limsup_{\gamma \to 1^+} \sup_{P \in \Omega} P(C_{\gamma}^K) \leq \varepsilon,
	$$
and since $\varepsilon$ is arbitrary this concludes the proof.
\end{proof}

\begin{proof}[Proof of Proposition~\ref{failproposition}]
After a translation of the real line it may be assumed that $x = 0$. Let $\beta_0 \in (0, \, 1/2)$
be such that
$P(\{ (1 - \beta)\delta_{-1} + \beta \delta_{1}; \, \beta_0 < \beta < 1 - \beta_0 \}) =: d > 0$
and set $c = (2 - \beta_0) / (2 - \beta_0^2)$ (thus $c \in (0, \, 1)$). For $n = 1, 2, \ldots$ let $B_n$ be
the subset of $\mathcal{M}^{\square}$ defined by
	\begin{align*}
	B_n = \{ \nu \in \mathcal{M}^{\square}; \,
	&\nu([-1, \, -1 + 2^{-n}) \cup (1 - 2^{-n}, \, 1]) > c, \\
	&\nu([-1, \, -1 + 2^{-n})) > \beta_0, \\
	&\nu((1 - 2^{-n}, \, 1]) > \beta_0
	\}.
	\end{align*}
Let $T'_1, T'_2, \ldots$ be a sequence increasing to infinity such that $\scdi{\mu}{x}{T'_k} \to P$.
Each $B_n$ includes $\{ (1 - \beta)\delta_{-1} + \beta \delta_{1}; \, \beta_0 < \beta < 1 - \beta_0 \}$
and is open by Lemma \ref{opensetlemma}, so
	$$
	\liminf_{k \to \infty} \scdi{\mu}{x}{T'_k}(B_n) \geq P(B_n) \geq d,
	$$
and thus there is a subsequence $T_1, T_2, \ldots$ such that $\scdi{\mu}{x}{T_n}(B_n) \geq d / 2$
for all $n$. By passing to a subsequence again if necessary, it can be assumed that
$e^{-T_n}(1 - 2^{-n})$ is a decreasing sequence of numbers.

Now put $f'(x) = 1$ if $x \leq 0$ and define $f'(x)$ for $x > 0$ by setting
	$$
	f'\left( \frac{e^{-T_n}(1 - 2^{-n})}{2} \right) = \frac{2^n + 1}{2^n - 1}
	$$
and interpolating linearly. Then $f'$ is increasing, continuous and $\geq 1$. In
particular, $f(x) = \int_0^{x} f'(s) \intd{s}$ is a diffeomorphism with $f(x) = x$
for $x \leq 0$. If $x \geq e^{-T_n}$ then
	\begin{align*}
	f(x(1 - 2^{-n})) &\geq \frac{x(1 - 2^{-n})}{2} \left( 1 + f'\left( \frac{x(1 - 2^{-n})}{2} \right) \right)\\ & \geq
	\frac{x(1 - 2^{-n})}{2}
	\left( 1 + \frac{2^n + 1}{2^n - 1} \right)
	= x.
	\end{align*}
If follows that $f^{-1}(e^{-t}) \leq e^{-t}(1 - 2^{-n})$ for all $t \leq T_n$.

Suppose that $t \leq T_n$ is such that $\scfl{\mu}{0}{t} \in B_n$ and let
	$$
	\beta = \frac{\scfl{\mu}{0}{t}((1 - 2^{-n}, \, 1])}
	{\scfl{\mu}{0}{t}([-1, \, -1 + 2^{-n}) \cup (1 - 2^{-n}, \, 1])};
	$$
then $\beta \in (\beta_0, \, 1 - \beta_0)$ and
	\begin{align*}
	c(1 - \beta) < &\scfl{\mu}{0}{t}([-1, \, -1 + 2^{-n})) < 1 - \beta, \\
	c\beta < &\scfl{\mu}{0}{t}((1 - 2^{-n}, \, 1]).
	\end{align*}
It follows that
	\begin{equation}	\label{longcalceq}
	\begin{split}
	\scfl{(f\mu)}{0}{t}&([-1, \, -1 + 2^{-n}))
	=
	\frac{\mu(f^{-1}(e^{-t}[-1, \, -1 + 2^{-n})))}{\mu(f^{-1}(I_t))} \\
	&=
	\frac{\mu(e^{-t}[-1, \, -1 + 2^{-n}))}{\mu([-e^{-t}, \, f^{-1}(e^{-t})])}
	\geq
	\frac{\mu(e^{-t}[-1, \, -1 + 2^{-n}))}{\mu(e^{-t}[-1, \, 1 - 2^{-n}])} \\
	&\geq
	\frac{c(1 - \beta)}{1 - c\beta}
	\geq
	\frac{c(1 - \beta_0)}{1 - c\beta_0}
	=
	1 - \frac{\beta_0}{2}
	\geq
	\scfl{\mu}{0}{t}([-1, \, -1 + 2^{-n})) + \frac{\beta_0}{2}.
	\end{split}
	\end{equation}

Let $\varphi$ be a continuous function on $I$ such that
$\chi_{[-1, \, -1/2]} \leq \varphi \leq \chi_{[-1, \, 0]}$ and define
$g \in C(\mathcal{M}^{\square})$ by $g(\nu) = \nu(\varphi)$. Then
$g(\scfl{(f\mu)}{0}{t}) \geq g(\scfl{\mu}{0}{t})$ for all $t$, and if $t \leq T_n$
and $\scfl{\mu}{0}{t} \in B_n$ then $g(\scfl{(f\mu)}{0}{t}) \geq g(\scfl{\mu}{0}{t}) + \beta_0 / 2$
by \eqref{longcalceq}. Thus
	\begin{align*}
	\scdi{f\mu}{f(0)}{T_n}(g) - \scdi{\mu}{0}{T_n}(g)
	&=
	\frac{1}{T_n} \int_{0}^{T_n} g(\scfl{(f\mu)}{0}{t}) - g(\scfl{\mu}{0}{t}) \intd{t} \\
	&\geq
	\frac{1}{T_n} \int_{\{ t \leq T_n; \, \scfl{\mu}{0}{t} \in B_n \}}
	g(\scfl{(f\mu)}{0}{t}) - g(\scfl{\mu}{0}{t}) \intd{t} \\
	&\geq
	\frac{\beta_0}{2T_n} \lambda(\{ t \leq T_n; \, \scfl{\mu}{0}{t} \in B_n \})
	\geq
	\frac{\beta_0 d}{4}
	\end{align*}
for all $n$. This shows that $\scdi{\mu}{0}{T}$ and $\scdi{f\mu}{0}{T}$ are not asymptotic.
\end{proof}

There is a discrepancy between the condition in Proposition \ref{failproposition}
allowing $\scdi{\mu}{x}{T}$ and $\scdi{f\mu}{f(x)}{T}$ not to be asymptotic,
and the condition in Proposition \ref{succeedproposition} ensuring that they are. Namely,
if $P(\{ (1 - \beta)\delta_{-1} + \beta \delta_1; \, 0 < \beta < 1 \}) = 0$ for all
$\omega$-limit points $P$ of $\scdi{\mu}{x}{T}$ but there is some $\omega$-limit point
$P_0$ such that $P_0(\{ \delta_{-1}, \, \delta_{1} \}) > 0$, then neither
Proposition \ref{failproposition} nor Proposition \ref{succeedproposition} applies. In this
case, it can happen either that $\scdi{\mu}{x}{T}$ and $\scdi{f\mu}{f(x)}{T}$ are
asymptotic for all $f \in \diffk{1}$ or that there is some $f$ for which
they are not asymptotic, depending on $\mu$ and $x$. This is illustrated
in the next two examples.

\begin{example}	\label{example4}
For $x > 0$ let
	$$
	G(x) = e^{-\frac{1}{x}}
	$$
and let $\mu$ be the measure on \rea given by $\mu((-\infty, \, 0]) = 0$ and
	$$
	\mu([0, \, x]) = G(x), \qquad x > 0.
	$$
As in example \ref{example3},
	$$
	\lim_{x \to 0^+} \frac{G(cx)}{G(x)} = 0
	$$
for any $c \in (0, \, 1)$.

Take any $f \in \diffk{1}$ such that $f(0) = 0$ and $f'(0) = 1$ --- then $f^{-1}$ has
the form $f^{-1}(x) = x(1 + r(x))$ where $\lim_{x \to 0} r(x) = 0$. Thus for any
$c \in (0, \, 1)$
	\begin{align*}
	\scfl{(f\mu)}{0}{t} ([-1, \, c]) &=
	\frac{\mu([f^{-1}(-e^{-t}), \, f^{-1}(ce^{-t})])}{\mu([f^{-1}(-e^{-t}), \, f^{-1}(e^{-t})])}
	=
	\frac{G(ce^{-t}(1 + r(ce^{-t})))}{G(e^{-1}(1 + r(e^{-t})))} \\
	&\leq
	\frac{G(c^{2 / 3}e^{-t})}{G(c^{1/3}e^{-t})} \to 0, \quad t \to \infty,
	\end{align*}
where the inequality holds for all large enough $t$. Thus
$\lim_{t \to \infty}\scfl{(f\mu)}{0}{t} = \delta_1$.

Now take \emph{any} $f \in \diffk{1}$. Then $f$ can be written on the form
$f(x) = f(0) + f'(0)f_0(x)$ where $f_0$ is a diffeomorphism with $f_0(0) = 0$ and
$f_0'(0) = 1$. Since $\scfl{(f\mu)}{f(0)}{t - s} = (f_0 \mu)_{0, t}$ where $s = \log f'(0)$
it follows from the preceding paragraph that $\lim_{t \to \infty}\scfl{(f\mu)}{0}{t} = \delta_1$.
This holds in particular if $f$ is the identity map and thus $\scfl{\mu}{0}{t}$ and
$\scfl{(f\mu)}{f(0)}{t - s}$ are asymptotic for any $f \in \diffk{1}$.
\end{example}

\begin{example}	\label{example5}
For $x > 0$ let
	$$
	G(x) = \exp\left(-\exp\left(\frac{1}{x - x^2}\right)\right), \qquad
	H(x) = \exp\left(-\exp\left(\frac{1}{x + x^2}\right)\right).
	$$
Since both $G$ and $H$ are continuous and increasing in $(0, \, \frac{1}{2}]$ and
$\lim_{x \to 0} G(x) = \lim_{x \to 0} H(x) = 0$, there is a measure $\mu$ on
$[-\frac{1}{2}, \, \frac{1}{2}]$ such that
	\begin{align*}
	&\mu([x, \, 0]) = G(|x|), \qquad -\frac{1}{2} \leq x < 0 \\
	&\mu([0, \, x]) = H(x), \qquad \phantom{-||}0 < x \leq \frac{1}{2}.
	\end{align*}
Now,
	\begin{align*}
	\frac{G(x)}{H(x)} &=
	\exp\left(\exp\left( \frac{1}{x + x^2} \right) - \exp\left( \frac{1}{x - x^2} \right)\right) \\
	&=
	\exp\left(\exp\left( \frac{1}{x} - \frac{1}{1 + x} \right) 
	- \exp\left( \frac{1}{x} + \frac{1}{1 - x} \right)\right) \\
	&=
	\exp\left(e^{\frac{1}{x}} \cdot \left( \exp\left( \frac{-1}{1 + x} \right)
	- \exp\left( \frac{1}{1 - x} \right) \right) \right) \to 0,
	\quad x \to 0,
	\end{align*}
since the inner parenthesis goes to $e^{-1} - e$ so that the argument of the
outer exponential function goes to $-\infty$.
For $c \in (0, \, 1)$,
	\begin{align*}
	\frac{H(cx)}{H(x)} &=
	\exp\left(\exp\left(\frac{1}{x + x^2}\right) - \exp\left(\frac{1}{cx + c^2x^2}\right)\right) \\
	&\leq
	\exp\left(\exp\left(\frac{1}{x + x^2}\right) - \exp\left(\frac{1}{c}\frac{1}{x + x^2}\right)\right) \\
	&=
	\exp\left(\exp\left(\frac{1}{x + x^2}\right) -
	\exp\left(\frac{1}{x + x^2}\right)^{1 / c}\right) \to 0, \quad x \to 0,
	\end{align*}
since $\exp(\frac{1}{x + x^2}) \to \infty$ and $\frac{1}{c} > 1$.
Hence for any $c \in (0, \, 1)$,
	$$
	\scfl{\mu}{0}{t}([-1, \, c]) = \frac{G(e^{-t}) + H(ce^{-t})}{G(e^{-t}) + H(e^{-t})}
	\to 0, \quad t \to \infty,
	$$
so $\lim_{t \to \infty} \scfl{\mu}{0}{t} = \delta_{1}$.

By the implicit function theorem, there is a diffeomorphism $g$ defined in a neighbourhood
$(-\varepsilon, \, \varepsilon)$ of $0$ such that $g(0) = 0$, $g'(0) = 1$ and
	$$
	g(x) + g(x)^2 = x - x^2.
	$$
Extend $g$ to a diffeomorphism in $\diffk{1}$ and let $f = g^{-1}$. Then for
$x \in (-\varepsilon, \, 0)$
	\begin{align*}
	G(|f^{-1}(x)|) &=
	\exp\left(-\exp\left(\frac{1}{-g(x) - g(x)^2}\right)\right) \\
	&=
	\exp\left(-\exp\left(\frac{1}{-x + x^2}\right)\right)
	=
	H(|x|),
	\end{align*}
and for $x \in (0, \, \varepsilon)$
	$$
	H(f^{-1}(x)) =
	\exp\left(-\exp\left(\frac{1}{g(x) + g(x)^2}\right)\right)
	=
	\exp\left(-\exp\left(\frac{1}{x - x^2}\right)\right)
	=
	G(x).
	$$
Thus $(f\mu)(A) = \mu(-A)$ for any measurable $A \subset (-\varepsilon, \, \varepsilon)$ and
it follows that $\lim_{t \to \infty} \scfl{(f\mu)}{0}{t} = \delta_{-1}$.
\end{example}

As Example \ref{example4} shows, the hypothesis in Proposition \ref{succeedproposition} is
not the weakest possible. Proposition \ref{succeedproposition} was proved by showing that if
$P(\{ (1 - \beta)\delta_{-1} + \beta \delta_1; \, 0 \leq \beta \leq 1 \}) = 0$
for every distribution $P$ in the $\omega$-limit set of $\scdi{\mu}{x}{T}$, then \eqref{TC}
is satisfied. The following proposition says that the opposite implication holds ---
thus to make the hypothesis of Proposition \ref{succeedproposition} weaker one would have to use
something other than Lemma \ref{TClemma}.

\begin{proposition}
Let $\mu \in \mathcal(\rea)$ and $x \in \supp \mu$ and suppose that the condition
\eqref{TC} is satisfied. Then 
$P(\{ (1 - \beta)\delta_{-1} + \beta \delta_1; \, 0 \leq \beta \leq 1 \}) = 0$
for every distribution $P$ in the $\omega$-limit set of $\scdi{\mu}{x}{T}$.

\begin{proof}
After a translation of the real line it may be assumed that $x = 0$. For any $\gamma > 1$
and any $K > 1$ let
	$$
	V_{\gamma}^K = \left\{ \nu \in \mathcal{M}^{\square}; \, 
	\nu\left( \frac{1}{\gamma} I \right)
	< \frac{1}{K} \right\};
	$$
this is an open set by Lemma \ref{opensetlemma} and it includes
$\{ (1 - \beta)\delta_{-1} + \beta \delta_1; \, 0 \leq \beta \leq 1 \}$. Let $\varepsilon > 0$.
By \eqref{TC} it is possible to choose $K$ and $\gamma > 1$ such that
	\begin{align*}
	\limsup_{T \to \infty} \scdi{\mu}{0}{T}(V_{\gamma}^K) &=
	\limsup_{T \to \infty} \frac{1}{T} \lambda\left(
	\left\{ t \leq T; \, \frac{\mu(\frac{1}{\gamma} I_t )}{\mu(I_t)} < \frac{1}{K} \right\}
	\right) \\
	&=
	\limsup_{T \to \infty} \frac{1}{T} \lambda\left(
	\left\{ t \leq T; \, \frac{\mu(I_t )}{\mu(\gamma I_t)} < \frac{1}{K} \right\}
	\right) \\
	&=
	\limsup_{T \to \infty} \frac{1}{T} \lambda\left(
	\left\{ t \leq T; \, \frac{\mu(\gamma I_t )}{\mu(I_t)} > K \right\}
	\right) < \varepsilon.
	\end{align*}
Now let $T_1, T_2, \ldots$ be a sequence increasing to infinity such that
$\scdi{\mu}{0}{T_k} \to P$. Then
	\begin{align*}
	P(\{ (1 - \beta)\delta_{-1} + \beta \delta_1; \, 0 \leq \beta \leq 1 \}) &\leq
	P(V_{\gamma}^K) \stackrel{*}{\leq}
	\liminf_{k \to \infty} \scdi{\mu}{0}{T_k} (V_{\gamma}^K) \\ &\leq
	\limsup_{T \to \infty} \scdi{\mu}{0}{T} (V_{\gamma}^K) < \varepsilon,
	\end{align*}
where the inequality at $*$ holds since $V_{\gamma}^K$ is open. The proposition
follows since $\varepsilon$ is arbitrary.
\end{proof}
\end{proposition}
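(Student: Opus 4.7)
The plan is to find, for each $\varepsilon > 0$, an open neighbourhood $V$ of the set $S := \{(1-\beta)\delta_{-1} + \beta \delta_1; \, 0 \leq \beta \leq 1\}$ in $\mathcal{M}^{\square}$ such that $\limsup_T \scdi{\mu}{x}{T}(V) < \varepsilon$. Combined with the Portmanteau-style lower semicontinuity $P(V) \leq \liminf_k \scdi{\mu}{x}{T_k}(V)$ for any weak-$*$ convergent subsequence $\scdi{\mu}{x}{T_k} \to P$, this immediately gives $P(S) \leq P(V) < \varepsilon$ and hence $P(S) = 0$ since $\varepsilon$ is arbitrary. After translating I may assume $x = 0$, and the natural candidate neighbourhood is
	$$
	V_{\gamma}^{K} = \left\{\nu \in \mathcal{M}^{\square}; \, \nu\left(\tfrac{1}{\gamma}I\right) < \tfrac{1}{K}\right\},
	$$
for $\gamma > 1$ and $K > 1$, which is open by Lemma \ref{opensetlemma} and contains $S$ because every element of $S$ is supported on $\{\pm 1\}$ and so vanishes on the open interval $\tfrac{1}{\gamma}(-1,\,1)$.

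The main step is to relate $\scdi{\mu}{0}{T}(V_{\gamma}^{K})$ to the quantity appearing in hypothesis \eqref{TC}. Writing $\tfrac{1}{\gamma}I_t = I_{t + \log\gamma}$, the condition $\scfl{\mu}{0}{t} \in V_{\gamma}^{K}$ becomes $\mu(I_{t + \log\gamma})/\mu(I_t) < 1/K$, which after the substitution $s = t + \log \gamma$ (using $\gamma I_s = I_{s - \log \gamma}$) is the condition $\mu(\gamma I_s)/\mu(I_s) > K$. Since this reparameterisation shifts the time interval by the fixed amount $\log \gamma$ and therefore alters the Lebesgue measure of the relevant set by at most $\log \gamma$, dividing by $T$ and letting $T \to \infty$ yields
	$$
	\limsup_{T \to \infty} \scdi{\mu}{0}{T}(V_{\gamma}^{K}) \leq \limsup_{T \to \infty} \tfrac{1}{T}\lambda \left(\left\{t \leq T; \, \frac{\mu(\gamma I_t)}{\mu(I_t)} \geq K\right\}\right).
	$$
Hypothesis \eqref{TC} makes the right-hand side smaller than any prescribed $\varepsilon > 0$ once $K$ is taken large enough and $\gamma$ close enough to $1$, which is exactly what was needed.

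The proof is essentially a routine application of the Portmanteau inequality once the correct open neighbourhood has been identified; the only step that needs care is the bookkeeping of the $\log \gamma$ time shift when comparing the two characterisations of $V_\gamma^K$, but this contribution is negligible after dividing by $T \to \infty$, so no real obstacle arises.
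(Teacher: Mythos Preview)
Your proof is correct and follows essentially the same route as the paper: define the open neighbourhood $V_{\gamma}^{K}$, translate the condition $\scfl{\mu}{0}{t}\in V_{\gamma}^{K}$ into the ratio condition from \eqref{TC} via the substitution $s=t+\log\gamma$, and then apply the Portmanteau inequality for open sets. The only cosmetic difference is that the paper records the reparameterisation as a chain of equalities of limsups, whereas you (correctly) note that the fixed shift $\log\gamma$ changes the relevant Lebesgue measure by a bounded amount and hence disappears after dividing by $T$.
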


\section{Another example}
In Example \ref{example3}, \ref{example4} and \ref{example5} above, the measure $\mu$ generates a distribution
$P$ that is a point mass on a convex combination of $\delta_{-1}$ and $\delta_{1}$, so there is certainly
$\nu \in \supp P$ with $0 \notin \supp \nu$. In all these examples this happens only at $x = 0$ ---
for any other point in the support of $\mu$, the generated distribution would be a point mass on
$\lambda$. This section will give an example of a measure $\mu$ that
generates the same distribution $P$ at $\mu$-a.e. $x$, where $P$ has $\frac{1}{2}(\delta_{-1} + \delta_1)$
in its support.

Let $\Lambda$ be a countable set with at least two elements and let $\{ [\,l\,] \}_{l \in \Lambda}$
be a collection of compact subintervals of $I$ whose interiors are pairwise disjoint. 
A uniformly expanding map $\sigma: \bigcup [\,l\,] \to I$ that maps the interior of each
$[\,l\,]$ linearly onto $(-1, \, 1)$ and the boundary of each $[\,l\,]$ into $\{ \pm 1\}$ will
be called a \emph{piecewise linear shift map}.
A \emph{cylinder} of generation $n$ is a compact interval of
the form
	$$
	[l_1 \ldots l_n] =
	\closure{\{x; \, \sigma^k(x) \in [l_{k - 1}] \text{ for } k = 1, \ldots, n \}}. 
	$$
By convention, $I$ is the cylinder $[\phantom{l}]$ of generation $0$.
Any pair of distinct cylinders of the same generation have disjoint interiors
and $[l_1 \ldots l_m] \subset [l'_1 \ldots l'_n]$ if and only if
$l_1 \ldots l_m$ is a prefix of $l'_1 \ldots l'_n$.
If $x$ lies in the interior of a cylinder of generation $n$, then the first
$n$ \emph{digits} $x_1, \ldots, x_n$ of $x$ are defined by $x \in [x_1 \ldots x_n]$.

Given an assignment of positive probabilities $(p_{l})_{l \in \Lambda}$,
the corresponding \emph{Bernoulli measure} is the unique probability measure
$\mu$ satisfying
	$$
	\mu([l_1 \ldots l_n]) = p_{l_1} \cdot \ldots \cdot p_{l_n}
	$$
for all cylinders $[l_1 \ldots l_n]$. The Bernoulli measure is
$\sigma$-invariant and ergodic, and $\supp \mu = \bigcap_{k = 0}^{\infty} \closure{\sigma^{-k}(I)}$
(this would be false if some $p_{l}$ were $0$).
For $\mu$-a.e. $x$, all digits $x_1, x_2, \ldots$ are defined.

\begin{lemma}	\label{selfsimlemma}
Let $\mu$ be a Bernoulli measure with respect to an orientation preserving piecewise linear
shift map $\sigma$. For $k = 0, 1, \ldots$ define the function $T_k: I \to [0, \, \infty]$
by
	$$
	T_k(x) =
	\begin{cases}
	\inf\{ t \geq 0; \, x + I_t \subset [x_1, \ldots, x_k] \}
	& \text{if } x_1, \ldots, x_k \text{ are defined} \\
	\infty
	& \text{otherwise},
	\end{cases}
	$$
and suppose that $T_1 - T_0$ is $\mu$-integrable. Then at $\mu$-a.e. $x$, $\mu$ generates
the distribution $P$ given by
	$$
	\int g \intd{P} =
	\frac{1}
	{\int (T_1 - T_0) \intd{\mu}} \int \int_{T_0(x)}^{T_1(x)} g(\scfl{\mu}{x}{t}) \intd{t} \intd{\mu(x)},
	\quad g \in C(\mathcal{M}^{\square}).
	$$

\begin{proof}
It can be that $T_1(x) = T_0(x)$ only if $x$ lies in the left half of the leftmost basic
interval or in the right half of the rightmost interval. Thus there is a basic
interval where $T_1 - T_0 > 0$ if $\Lambda$ has at least three elements, and if $\Lambda$
has two elements, say $\Lambda = \{ 0, \, 1 \}$, then $T_1 - T_0 > 0$ on at least one
of the cylinders $[01]$ and $[10]$. It follows that the denominator in the expression
for $P$ is positive.

Take any $g \in C(\mathcal{M}^{\square})$ and fix some
$\mu$-typical $x$ (meaning that the Birkhoff averages converge at $x$ to the integral with
respect to $\mu$) such that
$[x_1 \ldots x_k]$ is defined for all $k$. Since $\mu$ is
a Bernoulli measure, there are constants $c_k$ such that
	$$
	\sigma^k \left( \restr{\mu}{[x_1, \ldots, x_k]} \right) = c_k \mu
	$$
for all $k$, and thus
	\begin{equation}	\label{thesecondeq}
	\scfl{\mu}{x}{T_k(x)} =
	\left( \sigma^k \left( \restr{\mu}{[x_1, \ldots, x_k]} \right)\right)_{\sigma^k(x), T_0(\sigma^k(x))} =
	\scfl{\mu}{\sigma^k(x)}{T_0(\sigma^k(x))}.
	\end{equation}
Now,
	\begin{equation}	\label{thefirsteq}
	T_{k + 1}(x) - T_k(x) = T_1(\sigma^k(x)) - T_0(\sigma^k(x)),
	\end{equation}
so
	$$
	\lim_{n \to \infty} \frac{T_n(x)}{n} =
	\lim_{n \to \infty} \frac{1}{n} \left( T_0(x) +
	\sum_{k = 0}^{n - 1} T_{k + 1}(x) - T_k(x) \right)
	= \int (T_1 - T_0) \intd{\mu}
	$$
since $x$ is $\mu$-typical. In particular, this implies that if $n(T)$ is the unique natural
number such that $T_{n(T)} (x) \leq T < T_{n(T) + 1} (x)$ then
	$$
	\lim_{T \to \infty} \frac{T_{n(T)}(x)}{T} = 1.
	$$
Using this at the first step together with the fact that $g$ is bounded gives
	\begin{align*}
	&\lim_{T \to \infty} \frac{1}{T} \int_0^T g(\scfl{\mu}{x}{t}) \intd{t} =
	\lim_{T \to \infty} \frac{1}{T} \int_{T_0(x)}^{T_{n(T)} (x)} g(\scfl{\mu}{x}{t}) \intd{t} = \\
	&\lim_{n \to \infty} \frac{n}{T_n(x)} \frac{1}{n} \int_{T_0(x)}^{T_n(x)} g(\scfl{\mu}{x}{t}) \intd{t} = \\
	&\frac{1}{\int (T_1 - T_0) \intd{\mu}} \lim_{n \to \infty} \frac{1}{n} 
	\sum_{k = 0}^{n - 1} \int_{T_k(x)}^{T_{k + 1}(x)} g(\scfl{\mu}{x}{t}) \intd{t}
	= 
	\left[
	\begin{matrix}
	s = t - T_k(x) + T_0(\sigma^k(x)) \\
	\text{use \eqref{thesecondeq} and \eqref{thefirsteq}}
	\end{matrix}
	\right]
	= \\
	&\frac{1}{\int (T_1 - T_0) \intd{\mu}} \lim_{n \to \infty} \frac{1}{n} 
	\sum_{k = 0}^{n - 1} \int_{T_0(\sigma^k(x))}^{T_1(\sigma_k(x))}
	g(\scfl{\mu}{\sigma^k(x)}{s}) \intd{s}.
	\end{align*}
The lemma follows from Birkhoff's theorem applied to the last member, since
	$$
	x \mapsto \int_{T_0(x)}^{T_1(x)} g(\scfl{\mu}{x}{s}) \intd{s}
	\leq (T_1(x) - T_0(x)) \sup |g|
	$$
is $\mu$-integrable. 
\end{proof}
\end{lemma}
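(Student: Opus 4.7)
The plan is to use the self-similarity of $\mu$ under $\sigma$ to rewrite the time average $\frac{1}{T}\int_0^T g(\scfl{\mu}{x}{t})\intd{t}$ as a Birkhoff average along $x,\sigma(x),\sigma^2(x),\ldots$, and then invoke Birkhoff's ergodic theorem for the ergodic Bernoulli system $(\sigma,\mu)$.

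The first step is to verify the self-similarity identity $\scfl{\mu}{x}{T_k(x)}=\scfl{\mu}{\sigma^k(x)}{T_0(\sigma^k(x))}$ at any $\mu$-typical $x$. Since $\sigma^k$ is an orientation preserving affine bijection from $[x_1\ldots x_k]$ onto $I$ and the Bernoulli property gives $\sigma^k(\restr{\mu}{[x_1\ldots x_k]})=p_{x_1}\cdots p_{x_k}\cdot \mu$, the normalised window of $\mu$ around $x$ at scale $e^{-T_k(x)}$ coincides with the normalised window of $\mu$ around $\sigma^k(x)$ at scale $e^{-T_0(\sigma^k(x))}$. More generally, the same rescaling identifies $\scfl{\mu}{x}{t}$ for $t\in[T_k(x),T_{k+1}(x)]$ with $\scfl{\mu}{\sigma^k(x)}{t-T_k(x)+T_0(\sigma^k(x))}$, and as a byproduct one reads off the cocycle relation $T_{k+1}(x)-T_k(x)=T_1(\sigma^k(x))-T_0(\sigma^k(x))$.

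Since $(\sigma,\mu)$ is ergodic and $T_1-T_0$ is $\mu$-integrable, Birkhoff's theorem applied to the telescoped cocycle gives $T_n(x)/n\to c:=\int(T_1-T_0)\intd{\mu}$ at $\mu$-a.e.\ $x$. With $n(T)$ the largest integer such that $T_{n(T)}(x)\leq T$, this yields both $T_{n(T)}(x)/T\to 1$ and $(T_{n(T)+1}(x)-T_{n(T)}(x))/T\to 0$, so the boundary contributions from $[0,T_0(x)]$ and $[T_{n(T)}(x),T]$ to $\frac{1}{T}\int_0^T g(\scfl{\mu}{x}{t})\intd{t}$ are $o(1)$. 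On each interval $[T_k(x),T_{k+1}(x)]$ the change of variables $s=t-T_k(x)+T_0(\sigma^k(x))$ combined with the self-similarity identity gives
        $$
        \int_{T_k(x)}^{T_{k+1}(x)} g(\scfl{\mu}{x}{t})\intd{t}=F(\sigma^k(x)),\qquad F(y):=\int_{T_0(y)}^{T_1(y)} g(\scfl{\mu}{y}{s})\intd{s}.
        $$
Since $|F|\leq\|g\|_\infty(T_1-T_0)$ is $\mu$-integrable, a second application of Birkhoff's theorem yields $\frac{1}{n}\sum_{k=0}^{n-1}F(\sigma^k(x))\to\int F\intd{\mu}$. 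Combining these limits gives $\frac{1}{T}\int_0^T g(\scfl{\mu}{x}{t})\intd{t}\to\frac{1}{c}\int F\intd{\mu}=\int g\intd{P}$, which is the desired conclusion.

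The main obstacle is the self-similarity identity itself: one must carefully track how the affine rescaling performed by $\sigma^k$ on the cylinder $[x_1\ldots x_k]$ translates into a shift of the scenery parameter $t$, and verify that the scalar $p_{x_1}\cdots p_{x_k}$ cancels in the ratio defining $\scfl{\mu}{x}{t}$. A small preliminary point is to check that the denominator $\int(T_1-T_0)\intd{\mu}$ is strictly positive, which uses the assumption that $\Lambda$ has at least two elements (the case of exactly two letters requires looking at the cylinders $[01]$ and $[10]$ to find one where $T_1>T_0$). Once these points and the cocycle consequence are established, the remainder of the argument is the standard renewal-theoretic application of Birkhoff's theorem sketched above.
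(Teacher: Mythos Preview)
Your proposal is correct and follows essentially the same approach as the paper's own proof: both establish the self-similarity identity $\scfl{\mu}{x}{T_k(x)}=\scfl{\mu}{\sigma^k(x)}{T_0(\sigma^k(x))}$ and the cocycle relation $T_{k+1}(x)-T_k(x)=T_1(\sigma^k(x))-T_0(\sigma^k(x))$, use Birkhoff on $T_1-T_0$ to get $T_n(x)/n\to c$ and hence $T_{n(T)}(x)/T\to 1$, perform the same change of variables on each block $[T_k(x),T_{k+1}(x)]$ to produce the function $F$, and apply Birkhoff a second time to the integrable $F$. Even the ancillary points you flag (positivity of the denominator via the two-letter cylinder argument, cancellation of the constant $p_{x_1}\cdots p_{x_k}$) match the paper's treatment.
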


The support of the distribution $P$ in Lemma \ref{selfsimlemma} is \closure{E}, where
	$$
	E = \{ \scfl{\mu}{x}{t}; \, x \in \supp \mu, \, T_0(x) < t < T_1(x) \}.
	$$
If $\nu \notin \closure{E}$ then by Urysohn's lemma there is a non-negative function
$g \in C(\mathcal{M}^{\square})$ such that $g(\nu) > 0$ and $g$ is $0$ on $\closure{E}$. Then
	$$
	\int g \intd{P} =
	\frac{1}{\int (T_1 - T_0) \intd{\mu}} \int \int_{T_0(x)}^{T_1(x)} g(\scfl{\mu}{x}{t}) \intd{t} \intd{\mu(x)}
	= 0,
	$$
so the open neighbourhood $\{ g > 0 \}$ of $\nu$ has $P$-measure $0$. Thus $\supp P \subset \closure{E}$.
To see the other inclusion, the next lemma is needed.

\begin{lemma}	\label{contlemma}
Let $\mu \in \mathcal{M}(\rea)$ and fix $t_0 > 0$ and $x_0 \in \supp \mu$. Let
	$$
	\Delta = \{ (x, t); \, t \leq t_0, \, |x - x_0| \leq e^{-t} - e^{-t_0} \}.
	$$
Then
	$$
	\lim_{\substack{(x, t) \to (x_0, t_0)\\(x, t) \in \Delta}} \scfl{\mu}{x}{t} = \scfl{\mu}{x_0}{t_0}
	\qquad (\text{weak-}*).
	$$

\begin{proof}
Let $I_{x, t} = x + I_t$.
If $(x, t) \in \Delta$ then $I_{x_0, t_0} \subset I_{x, t}$, so
$\chi_{I_{x, t}} \to \chi_{I_{x_0, t_0}}$ pointwise when $(x, t) \to (x_0, t_0)$. Thus by the
dominated convergence theorem
	$$
	\lim_{\substack{(x, t) \to (x_0, t_0)\\(x, t) \in \Delta}}
	\mu(I_{x, t}) = \mu(I_{x_0, t_0}).
	$$
Take any $\varphi \in C([-1, 1])$. Again by the dominated convergence theorem,
	\begin{align*}
	\lim_{\substack{(x, t) \to (x_0, t_0)\\(x, t) \in \Delta}} \scfl{\mu}{x}{t} (\varphi)
	&= \lim_{\substack{(x, t) \to (x_0, t_0)\\(x, t) \in \Delta}}
	\frac{1}{\mu(I_{x, t})} \int \chi_{I_{x, t}}(\xi) \varphi(e^t (x - \xi)) \intd{\mu(\xi)} \\
	&=
	\frac{1}{\mu(I_{x_0, t_0})} \int \chi_{I_{x_0, t_0}}(\xi) \varphi(e^{t_0} (x_0 - \xi)) \intd{\mu(\xi)}
	= \scfl{\mu}{x_0}{t_0} (\varphi).
	\end{align*}
\end{proof}
\end{lemma}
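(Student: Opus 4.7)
The plan is to exploit the fact that the approach region $\Delta$ has been engineered precisely so that the windows $I_{x,t} := x + I_t$ always \emph{contain} the target window $I_{x_0, t_0}$; once this nesting is in hand, the rest is two applications of the dominated convergence theorem.

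First I would verify the nesting. For $(x, t) \in \Delta$ and any $\xi \in I_{x_0, t_0}$, the triangle inequality gives
	$$
	|\xi - x| \leq |\xi - x_0| + |x_0 - x| \leq e^{-t_0} + (e^{-t} - e^{-t_0}) = e^{-t},
	$$
so $I_{x_0, t_0} \subset I_{x, t}$. From this I would deduce pointwise convergence $\chi_{I_{x,t}}(\xi) \to \chi_{I_{x_0, t_0}}(\xi)$ at \emph{every} $\xi \in \rea$ as $(x, t) \to (x_0, t_0)$ along $\Delta$: for $\xi \in I_{x_0, t_0}$ the nesting makes the left member identically $1$ (including at the two boundary points, again by the same triangle inequality), while for $\xi \notin I_{x_0, t_0}$ one has $|\xi - x_0| > e^{-t_0}$, and continuity forces $|\xi - x| > e^{-t}$ eventually.

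Next I would combine this with dominated convergence. Restricting to a closed sub-neighborhood of $(x_0, t_0)$ in $\Delta$ on which $t \geq t_1$ for some $t_1 < t_0$, each $\chi_{I_{x, t}}$ is dominated by $\chi_{I_{x_0, t_1}}$, and the latter is $\mu$-integrable since $\mu$ is Radon and $I_{x_0, t_1}$ is compact. Hence $\mu(I_{x, t}) \to \mu(I_{x_0, t_0})$, and this limit is strictly positive because $x_0 \in \supp \mu$ and $I_{x_0, t_0}$ is a neighborhood of $x_0$; in particular $\scfl{\mu}{x}{t}$ is eventually well defined along the approach.

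For the weak-$*$ convergence, I would use the pushforward description of $\scfl{\mu}{x}{t}$ to write
	$$
	\scfl{\mu}{x}{t}(\varphi) = \frac{1}{\mu(I_{x, t})}
	\int \chi_{I_{x, t}}(\xi) \, \varphi(e^{t}(\xi - x)) \intd{\mu(\xi)}
	$$
for each $\varphi \in C(I)$. The integrand converges pointwise to $\chi_{I_{x_0, t_0}}(\xi) \varphi(e^{t_0}(\xi - x_0))$ by the previous step combined with continuity of $\varphi$, and is dominated by $\|\varphi\|_\infty \chi_{I_{x_0, t_1}}$. A second application of dominated convergence handles the numerator, and dividing by the limit of the denominators gives $\scfl{\mu}{x}{t}(\varphi) \to \scfl{\mu}{x_0}{t_0}(\varphi)$. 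The only subtlety I anticipate is at the endpoints of $I_{x_0, t_0}$, where pointwise convergence of the indicator could in principle fail if $\mu$ has atoms — but this is exactly what the defining inequality of $\Delta$ rules out, as noted above.
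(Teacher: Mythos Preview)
Your proposal is correct and follows essentially the same route as the paper's proof: the nesting $I_{x_0,t_0}\subset I_{x,t}$ for $(x,t)\in\Delta$, then two applications of dominated convergence, first for the normalizing denominator and then for the integral against $\varphi$. You supply more detail (the explicit dominating function $\chi_{I_{x_0,t_1}}$ and the endpoint discussion), and your integrand $\varphi(e^t(\xi-x))$ has the sign matching the definition of $\scfl{\mu}{x}{t}$, whereas the paper writes $\varphi(e^t(x-\xi))$; neither point affects the argument.
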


For each $k$, the function $T_k$ is finite and continuous in the interior of
each cylinder of generation $k$, and if $x$ does not lie in the interior of
any such cylinder then $T_k(x) = \infty$ and $\lim_{y \to x}T_k(y) = \infty$ ---
thus $T_k$ is continuous at every $x \in I$ in the sense that
$\lim_{y \to x} T_k(y) = T_k(x)$. Take any $\scfl{\mu}{x_0}{t_0} \in E$ and let $V$
be an open set around $\scfl{\mu}{x_0}{t_0}$. Because of Lemma \ref{contlemma}
and the continuity of $T_0$ and $T_1$, there is an open rectangle
$X \times Y$ with $x_0 \in X$ such that
	$$
	\begin{array}{ll}
	\{ \scfl{\mu}{x}{t}; \, (x, t) \in X \times Y \} \subset V 	& \text{and } \\
	T_0(x) < t < T_1(x)										& \text{for all} (x, t) \in X \times Y.
	\end{array}
	$$
Then $P(V) \geq (\mu \times \lambda) (X \times Y) > 0$
since $x_0 \in \supp \mu$. Thus
$E \subset \supp P$ and since $\supp P$ is closed it follows that $\closure{E} \subset \supp P$.

\begin{example}
Let $\Lambda = \ints \setminus\{ 0 \}$ and for $l \in \Lambda$ define $[\,l\,]$ to
be the interval
	$$
	[\,l\,] = \sign(l) [2^{-|l|}, \, 2^{-(|l| - 1)}].
	$$
Let $\sigma: I \setminus \{ 0 \} \to I$ be a map that maps the interior
of each $[\,l\,]$ linearly onto $(-1, \, 1)$, preserving orientation. Let $p_1, p_2, \ldots$ be
a sequence of positive numbers that sum to $1 / 2$ and satisfy
	\begin{equation}	\label{decrcond}
	\lim_{n \to \infty} \frac{\sum_{l = n + 1}^{\infty} p_l }{p_n} = 0,
	\end{equation}
set $p_{l} = p_{|l|}$ for $l < 0$ and let $\mu$ be the corresponding Bernoulli measure.
Note that $0 \in \supp \mu$ since all $p_l$ are positive.

Define $T_0$ and $T_1$ as in Lemma \ref{selfsimlemma}. Since
	$$
	T_0(x) = -\log(1 - |x|)
	$$
and $\mu$ is symmetric around $0$,
	\begin{align*}
	\int T_0 \intd{\mu} &=
	-2 \int_{[0, 1]} \log(1 - x) \intd{\mu(x)} \leq
	-2 \sum_{k = 0}^{\infty} \mu([1 - 2^{-k}, \, 1 - 2^{-(k + 1)}]) \log(2^{-(k + 1)}) \\
	&=
	2 (1 - p_1) \log 2 \sum_{k = 0}^{\infty} (k + 1) p_1^{k} =
	\frac{2 \log 2}{(1 - p_1)} <
	\infty.
	\end{align*}
Thus $T_0$ is integrable. Furthermore
	$$
	T_1(x) = \log\left( \frac{2}{\text{length of } [x_1]} \right) + T_0(\sigma(x))
	= (|x_1| + 1) \log 2 + T_0(\sigma(x)),
	$$
where $x_1$ is the integer such that $x \in [x_1]$, so
	$$
	\int T_1 \intd{\mu} = 2 \log 2 \sum_{l = 1}^{\infty} (l + 1) p_l + \int T_0 \intd{\mu}
	$$
using that $\mu$ is $\sigma$-invariant. The sum converges because of \eqref{decrcond}, so
$T_1$ is integrable as well. Thus by Lemma \ref{selfsimlemma}, $\mu$ generates a distribution
$P$ at $\mu$-a.e. $x$ with
	$$
	\supp P = \closure{
	\{ \scfl{\mu}{x}{t}; \, x \in \supp \mu, \, T_0(x) \leq t \leq T_1(x) \}
	}.
	$$

Now let
	$$
	t_{m, n} = -\log\left( (1 + 4^{-m}) 2^{-n} \right)
	$$
and consider the measures $\scfl{\mu}{0}{t_{m, n}}$. Since $0 \in \supp \mu$ and
$T_0(0) = 0$ and $T_1(0) = \infty$, they are all in $\supp P$. Moreover,
	\begin{align*}
	\scfl{\mu}{0}{t_{m, n}} \left( \pm \left[ 1 - \frac{1}{1 + 4^m}, \, 1\right]\right)
	&=
	\frac	{\mu ( \pm [2^{-n}, \, (1 + 4^{-m}) 2^{-n} ] )}
			{\mu ([-(1 + 4^{-m}) 2^{-n}, \, (1 + 4^{-m}) 2^{-n}] )} \\
	&=
	\frac{p_1^m p_n}{2(p_1^m p_n + \sum_{l = n + 1}^{\infty} p_l)}
	\to \frac{1}{2},
	\quad n \to \infty
	\end{align*}
by \eqref{decrcond}. Thus for any $\varepsilon > 0$ there is a measure in $\supp P$ that
gives measure $> \frac{1}{2} - \varepsilon$ to each of the intervals $\pm[1 - \varepsilon, \, 1]$.
Since $\supp P$ is closed, it follows that $\frac{1}{2}(\delta_{-1} + \delta_1) \in \supp P$.

For $\mu$-a.e. $x$, the local dimension of $\mu$ at $x$ is
	$$
	D(\mu, x) = \frac{h_{\mu}(x)}{u(x)}
	= \frac{-1}{\log 2} \frac{\sum_{l = 1}^{\infty} p_l \log p_l}{\sum_{l = 1}^{\infty} (l + 1)p_l},
	$$
where $h_{\mu}(x)$ is the local entropy and $u(x)$ is the Lyapunov exponent at $x$. By
suitable choice of the probabilities $(p_l)$, the dimension can be made to take any
value in $(0, 1)$. Consider for example
	$$
	p^N_l =
	\begin{cases}
	c_N 2^{-l}	& \text{if } l \leq N \\
	c_N \frac{1}{l!}	& \text{if } l > N,
	\end{cases}
	$$
where the constant $c_N$ is chosen so that $(p^N_l)$ sums to $1 / 2$. These probabilities
satisfy \eqref{decrcond} for any finite $N$, and
	$$
	\frac{-1}{\log 2}
	\frac{\sum_{l = 1}^{\infty} p^N_l \log p^N_l}{\sum_{l = 1}^{\infty} (l + 1)p^N_l} \geq
	\frac{\sum_{l = 1}^{N} 2^{-(l + 1)} (l + 1)}{\sum_{l = 1}^{\infty} 2^{-(l + 1)} (l + 1)}
	\to 1, \quad N \to \infty
	$$
(this is not surprising since $\mu$ becomes Lebesgue measure if $N = \infty$). Thus, the
dimension can be made arbitrarily close to $1$. On the other hand, if two probabilities
are swapped then the Lyapunov exponent changes but the entropy does not. In particular,
it is possible to make the local dimension arbitrarily close to $0$ by putting the largest
probability at a high index.
Thus for any $\varepsilon > 0$, there are $(p^{\varepsilon}_l)$ and $(q^{\varepsilon}_l)$ that
sum to $1 / 2$, satisfy \eqref{decrcond} and give $\mu$ dimension at least $1 - \varepsilon$ and at
most $\varepsilon$ respectively. Any convex combination of $(p^{\varepsilon}_l)$ and $(q^{\varepsilon}_l)$
also sums to $1 / 2$ and satisfies \eqref{decrcond}, and since the dimension varies continuously with the
probabilities, it takes all values in $(\varepsilon, \, 1 - \varepsilon)$ on the line segment
between $(p^{\varepsilon}_l)$ and $(q^{\varepsilon}_l)$.
\end{example}

\bibliographystyle{plain}
\bibliography{references}
\end{document}